\numberwithin{equation}{section}
\newtheorem{theorem}{Theorem}[section]
\newtheorem{lemma}[theorem]{Lemma}
\newtheorem{corollary}[theorem]{Corollary}
\newtheorem{definition}[theorem]{Definition}
\setlist{nosep}
\begin{document}

\title{\bf Characterization of expansive polynomials by special determinants}

\author{M.\ J.\ Uray (Uray M.\ J\'anos) \\ uray.janos@inf.elte.hu \\ ELTE -- E\"otv\"os Lor\'and University Budapest \\ Faculty of Informatics \\ Department of Computer Algebra}
\date{}

\maketitle

\begin{abstract}
A polynomial is expansive if all of its roots lie outside the unit circle. We define some special determinants involving the coefficients of a real polynomial and formulate necessary and sufficient conditions for expansivity using these determinants. We show how these conditions can be turned into an algorithm, which, for integer polynomials, avoids exponential coefficient growth. We also examine the question how close the roots of an expansive polynomial can be to the unit circle if the coefficients are integers. We give several lower bounds on this distance in terms of different measures of the polynomial (e.g.\ its height). The simplest one is derived by Liouville's inequality, but then we improve this result and give different bounds using our special determinants.
\end{abstract}

\section{Introduction} \label{sec:intro}

Throughout this paper, $f(x)$ denotes a polynomial of degree $n$ with coefficients $a_i \in \mathbb{C}$ and roots $\alpha_i \in \mathbb{C}$:
\begin{align*}
	f(x) &= a_n x^n + \ldots + a_1 x + a_0 = \\
	&= a_n (x - \alpha_1) (x - \alpha_2) \ldots (x - \alpha_n)
\end{align*}
with $n \geq 1$ and $a_n \neq 0$.

We mainly focus on polynomials with integer coefficients, but some results are stated more generally.

Our main interest is in the following type of polynomials:
\begin{definition} \label{def:expansive}
The polynomial $f$ is \emph{expansive} if all of its roots lie outside the unit circle, i.e.\ $\forall \alpha_i: |\alpha_i| > 1$.
\end{definition}

Expansive polynomials (or matrices with expansive characteristic polynomial) often arise in a wide range of problems involving convergence, where the convergence is ensured by the expansivity of an operator (or rather the contractivity of its inverse). One of our main motivations comes from so-called matrix-based numeration systems \cite[Def.~3.7]{dyndir} \cite{kovacs-binsys}. These systems form a matrix and vector-based generalization of ordinary numeral systems. Many relevant properties of these systems, e.g.\ if all vectors have a representation, the representation is unique, etc., depend on the expansivity of the matrix. The running time of some related algorithms also depends on how tightly the matrix fulfills the condition of expansivity.

In this paper we will use the following functions to measure the complexity of polynomials. Several results are stated in terms of these quantities.
\begin{definition} \label{def:measures}
Denote by $H(f)$, $L(f)$ and $M(f)$ the height, length and Mahler measure of the polynomial $f$, respectively:
\begin{align*}
	H(f) &:= \max_{i=0}^n |a_i|, \\
	L(f) &:= \sum_{i=0}^n |a_i|, \\
	M(f) &:= |a_n| \prod_{i=1}^{n} \max(1, |\alpha_i|).
\end{align*}
\end{definition}
Note that for an expansive polynomial, we simply have $M(f) = |a_0|$.

The paper is built up as follows.
Section~\ref{sec:former} reviews some former results about expansive polynomials.
In Section~\ref{sec:dets}, we introduce a determinant-based condition for the expansivity of real polynomials, and derive some properties of these determinants and their generalizations.
In Section~\ref{sec:bounds}, we give lower bounds on the distance between roots of integer expansive polynomials and the unit circle.
Finally, in Section~\ref{sec:final}, further research directions are given.

\section{Former results} \label{sec:former}

We give several connections between the coefficients of expansive polynomials. First of all, the most trivial one is (since $a_0 = \pm a_n \alpha_1 \alpha_2 \ldots \alpha_n$):
\begin{lemma} \label{thm:exp-cons-lead}
For an expansive polynomial $f$:
\begin{equation*}
	|a_n| < |a_0|.
\end{equation*}
\end{lemma}
The inner coefficients can be bounded using the constant and leading coefficients (proved in \cite{burcsi-phd}):
\begin{lemma} \label{thm:exp-coeffs} 
If $f$ is expansive, then the coefficients have the following bounds:
\begin{equation*}
	|a_k| < \binom{n-1}{k-1} |a_n| + \binom{n-1}{k} |a_0| \quad (1 \leq k \leq n-1),
\end{equation*}
and this is the best possible bound generally.
\end{lemma}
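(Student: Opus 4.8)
The plan is to translate the statement about coefficients into a symmetric inequality about the moduli of the roots and then prove the latter by a one-variable-at-a-time induction. Writing $f(x) = a_n\prod_{i=1}^{n}(x-\alpha_i)$, Vieta's formulas give $a_k = a_n(-1)^{n-k}e_{n-k}(\alpha_1,\dots,\alpha_n)$, where $e_m$ is the $m$-th elementary symmetric polynomial, and $|a_0| = |a_n|\prod_{i=1}^n|\alpha_i|$. Putting $r_i := |\alpha_i|$ and applying the triangle inequality $|e_{n-k}(\alpha_1,\dots,\alpha_n)| \le e_{n-k}(r_1,\dots,r_n)$, it suffices to show that for all $r_i \ge 1$,
\[
	e_{n-k}(r_1,\dots,r_n) \le \binom{n-1}{k-1} + \binom{n-1}{k}\prod_{i=1}^{n} r_i,
\]
with strict inequality whenever every $r_i>1$ and $1\le k\le n-1$; since $f$ is expansive we indeed have $r_i>1$ for all $i$. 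I note in passing that at $r_1=\dots=r_n=1$ both sides equal $\binom{n}{k}$ by Pascal's rule, so equality holds on the unit circle — a first sign that the constants cannot be lowered.

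For the inequality I would induct on $n$, adopting the convention that $\binom{a}{b}=0$ outside $0\le b\le a$ so that the boundary indices $k\in\{0,n\}$ need no separate treatment. Splitting off the last variable via $e_{m}(r_1,\dots,r_n) = e_{m}(r') + r_n\,e_{m-1}(r')$, where $r'=(r_1,\dots,r_{n-1})$ and $P:=\prod_{i<n} r_i$, I substitute the inductive bounds for the two symmetric polynomials $e_{n-k}(r')$ and $e_{n-k-1}(r')$ and simplify the right-hand side with Pascal's rule. The key point is that after the dust settles the difference between the claimed bound and this estimate collapses to the single product
\[
	\binom{n-2}{k-1}(r_n-1)(P-1),
\]
which is $\ge 0$ because $r_n\ge 1$ and $P\ge 1$. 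This proves the inequality, and it also hands me strictness for free: when all $r_i>1$ and $1\le k\le n-1$ we have $\binom{n-2}{k-1}\ge 1$, $r_n-1>0$ and $P-1>0$, so the gap is strictly positive irrespective of whether the inductive bounds were tight. The clean factorization of this gap is the crux of the whole argument; finding it, and verifying that the degenerate binomials behave, is where most of the care goes.

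Finally, to justify that the bound is best possible I would exhibit an explicit expansive family approaching equality. Take $f(x)=a_n\,(x-R)\,(x-(1+\delta))^{n-1}$ with $R>1$ fixed and $\delta\to 0^+$; this is expansive, and because all its roots are real and positive the triangle-inequality step above is an equality. A direct computation of $e_{n-k}$ on the multiset $\{R,1,\dots,1\}$ gives, in the limit $\delta\to0$, $|a_k| = |a_n|\bigl(\binom{n-1}{k}R + \binom{n-1}{k-1}\bigr)$ while $|a_0|=|a_n|R$, so $|a_k|$ tends to exactly $\binom{n-1}{k-1}|a_n|+\binom{n-1}{k}|a_0|$. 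Since $R$ is arbitrary, the bound is approached arbitrarily closely for every admissible ratio $|a_0|/|a_n|$, which shows that neither binomial coefficient can be reduced; hence the bound is best possible. The main obstacle, as noted, is the symmetric inequality, and within it the precise cancellation yielding the factor $(r_n-1)(P-1)$.
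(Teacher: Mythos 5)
The paper does not actually prove this lemma: it is quoted from Burcsi's PhD thesis \cite{burcsi-phd}, so there is no in-paper proof to compare against, and your argument has to stand on its own. It does. The reduction via Vieta's formulas and the triangle inequality to the symmetric inequality $e_{n-k}(r_1,\dots,r_n) \le \binom{n-1}{k-1} + \binom{n-1}{k}\prod_{i} r_i$ is sound; the inductive step, using $e_{n-k}(r)=e_{n-k}(r')+r_n e_{n-k-1}(r')$ and the hypothesis at indices $k-1$ and $k$ in $n-1$ variables, does collapse via Pascal's rule to the gap $\binom{n-2}{k-1}(r_n-1)(P-1)$ (I checked the bookkeeping), and this gap is strictly positive for $1\le k\le n-1$ whenever all $r_i>1$, which gives the strict inequality of the lemma without requiring strictness in the inductive hypothesis. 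The boundary cases $k\in\{0,n\}$ hold with equality under your binomial conventions, and the base case $n=1$ is trivial, so the induction closes. For sharpness, the family $a_n(x-R)(x-(1+\delta))^{n-1}$ with $\delta\to 0^+$ makes the triangle-inequality step an equality and attains the bound in the limit with $|a_0|/|a_n|\to R$ for any $R>1$; to make the conclusion fully explicit you might add that letting $R\to\infty$ forces the coefficient of $|a_0|$ to be at least $\binom{n-1}{k}$ and letting $R\to 1^+$ then forces the coefficient of $|a_n|$ to be at least $\binom{n-1}{k-1}$, so neither constant can be reduced. This is a complete and correct proof of a statement the paper only cites.
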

Better bounds can be given if more coefficients are involved: \cite{burcsi-phd} gives bounds on $a_k$ and $a_{n-k}$ in terms of $a_0, a_1, \ldots, a_{k-1}$ and $a_{n-k+1}, \ldots, a_n$ ($k \leq n/2$). This gives a way to exhaustively search for all expansive polynomials with a given degree and constant term by generating the coefficients from outside towards the center.

The conditions above are only necessary but not sufficient. A natural question is how the expansivity of a polynomial can be decided algorithmically. A well-known method for this is the Schur--Cohn test \cite{schur-cohn}, which uses the following transformation:

\begin{definition} \label{def:schur}
The \emph{Schur transform} of $f$ is the polynomial $g$ with:
\begin{gather*}
	g(x) = b_n x^n + \ldots + b_1 x + b_0, \\
	b_k := \overline{a_0} a_k - a_n \overline{a_{n-k}}.
\end{gather*}
\end{definition}
Note that $b_n = 0$, so $\deg g \leq n-1$.
Then we have:
\begin{lemma} \label{thm:schur} \ \\ \vspace{-1\baselineskip}
\begin{itemize}
	\item If $|a_n| < |a_0|$, then $f$ and $g$ have the same number of roots inside the unit circle.
	\item If $|a_n| > |a_0|$, then $f$ has the same number of roots outside the unit circle as $g$ has inside it.
	\item In both cases, $f$ and $g$ share their roots on the unit circle.
\end{itemize}
The roots are counted with multiplicities.
\end{lemma}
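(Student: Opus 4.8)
The plan is to realize $g$ as a linear combination of $f$ and its conjugate-reciprocal (reversal) polynomial, and then to compare zero counts via Rouch\'e's theorem. Define $f^*(x) := x^n \overline{f(1/\overline{x})} = \sum_{k=0}^n \overline{a_{n-k}}\, x^k$. A direct coefficient comparison with Definition~\ref{def:schur} gives the identity $g(x) = \overline{a_0}\, f(x) - a_n\, f^*(x)$, which also explains why $b_n$ vanishes. Two properties of $f^*$ drive everything. First, on the unit circle $1/\overline{x} = x$, hence $f^*(x) = x^n \overline{f(x)}$ and therefore $|f^*(x)| = |f(x)|$ whenever $|x| = 1$. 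Second, the roots of $f^*$ are exactly the conjugate-reciprocals $1/\overline{\alpha_i}$ of the nonzero roots of $f$; since $\alpha \mapsto 1/\overline{\alpha}$ maps the interior of the disk to its exterior and fixes the circle, $f^*$ has as many roots inside the unit circle as $f$ has outside it, and the two share their roots on the circle.

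I would first dispose of the roots on the circle (the third bullet), since the other two cases assume $|a_n| \neq |a_0|$. For $|\alpha| = 1$, taking moduli in $\overline{a_0} f(\alpha) = a_n f^*(\alpha)$ and using $|f^*(\alpha)| = |f(\alpha)|$ yields $(|a_0| - |a_n|)\,|f(\alpha)| = 0$; as $|a_0| \neq |a_n|$, this shows $g(\alpha) = 0 \iff f(\alpha) = 0$. For multiplicities, write $f(x) = (x-\alpha)^m u(x)$ and $f^*(x) = (x-\alpha)^m v(x)$ with $u(\alpha), v(\alpha) \neq 0$; letting $x \to \alpha$ along the circle in $|f^*(x)| = |f(x)|$ gives $|u(\alpha)| = |v(\alpha)|$, so $\overline{a_0} u(\alpha) - a_n v(\alpha) \neq 0$ (again because $|a_0| \neq |a_n|$), whence $g$ vanishes at $\alpha$ to order exactly $m$. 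Thus $f$ and $g$ share their circle roots with multiplicities.

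For the first two bullets I would apply Rouch\'e's theorem on the contour $|x| = 1$, assuming for the moment that $f$ (hence $g$) has no root on the circle. If $|a_n| < |a_0|$, then on $|x| = 1$ we have $|a_n f^*(x)| = |a_n|\,|f(x)| < |a_0|\,|f(x)| = |\overline{a_0} f(x)|$, so $g = \overline{a_0} f - a_n f^*$ and $\overline{a_0} f$ enclose the same number of zeros; since $a_0 \neq 0$ here, this is the interior root count of $f$, giving the first bullet. If $|a_n| > |a_0|$, the inequality reverses and $g$ and $-a_n f^*$ enclose the same number of zeros, i.e.\ $N_{\mathrm{in}}(g) = N_{\mathrm{in}}(f^*) = N_{\mathrm{out}}(f)$ by the reversal property, giving the second bullet.

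The main obstacle is the general case in which $f$ does have roots on the unit circle: then $f$ vanishes somewhere on the contour and the strict inequality required by Rouch\'e fails there. I would handle this by indenting $|x| = 1$ with small arcs around the finitely many circle roots (equivalently, by a limiting argument as the arcs shrink), using the multiplicity statement of the third bullet to account consistently for the zeros of $f$ and $g$ swept past by the indentation, so that their contributions cancel in the difference of the counts. Verifying that these boundary contributions match — that the already-established equality of circle multiplicities is exactly what preserves the Rouch\'e balance along the indented contour — is the one genuinely delicate point; everything else is the bookkeeping above.
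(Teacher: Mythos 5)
The paper never proves this lemma --- it is quoted from Cohn's 1922 paper \cite{schur-cohn} as the classical basis of the Schur--Cohn test --- so there is no internal argument to measure yours against. Your route is the standard one: the decomposition $g = \overline{a_0}\,f - a_n f^*$ with $f^*(x) = x^n\overline{f(1/\overline{x})}$, the identities $|f^*(x)| = |f(x)|$ on $|x|=1$ and $N_{\mathrm{in}}(f^*) = N_{\mathrm{out}}(f)$, the multiplicity analysis at circle roots, and the two Rouch\'e comparisons are all correct as written.

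The one genuine gap is the one you flag yourself: when $f$ vanishes on $|x|=1$, the strict inequality Rouch\'e needs fails on the contour, and you only sketch an indentation argument without verifying the bookkeeping. You do not need contour surgery. You have already proved that $f$ and $f^*$ vanish on the circle at the same points $\beta_j$ with equal multiplicities $m_j$, and that $g$ vanishes there to exactly the same order. So put $p(x) = \prod_j (x-\beta_j)^{m_j}$ and write $f = p f_1$, $f^* = p h$, $g = p\,(\overline{a_0} f_1 - a_n h)$. From $|f| = |f^*|$ on the circle you get $|f_1| = |h|$ there (first off the $\beta_j$, then everywhere by continuity), and $f_1$, $h$ are zero-free on the circle; Rouch\'e now applies verbatim to $\overline{a_0} f_1$ versus $-a_n h$, and since $p$ contributes no zeros off the circle, the interior and exterior counts for $f$, $f^*$, $g$ coincide with those for $f_1$, $h$, $g/p$. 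This closes the argument cleanly. Two small points deserve a sentence in a final write-up: under either hypothesis $b_0 = |a_0|^2 - |a_n|^2 \neq 0$, so $g \not\equiv 0$ and its zero counts are meaningful; and when $|a_n| > |a_0|$ the constant term may vanish, but the bijection $\alpha \mapsto 1/\overline{\alpha}$ between the \emph{nonzero} roots of $f$ and the roots of $f^*$ still yields $N_{\mathrm{in}}(f^*) = N_{\mathrm{out}}(f)$, since roots of $f$ at the origin contribute to neither count.
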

It follows that $f$ is expansive if and only if $|a_n| < |a_0|$ and $g$ is expansive.

The algorithm itself (the Schur--Cohn test) works by recursively generating the Schur transform and checking the condition $|a_n| < |a_0|$ for each polynomial in the sequence. Note that due to the decreasing degree, the algorithm terminates with a constant polynomial in at most $n$ iterations.

There are other algorithms for deciding expansivity, for example \cite{burcsi-alg}. It transforms the condition of expansivity to stability, i.e.\ that all roots have negative real parts, by a simple transformation on the polynomial. Then the stability of the resulting polynomial is checked by converting its Hurwitz alternant to continued fraction form, see the details in \cite{burcsi-alg}. The main calculations in this algorithm involve a recurrence relation similar to the Schur transform. According to \cite{burcsi-alg}, this algorithm performs better in practice than the Schur--Cohn test if the polynomial is likely to be expansive.

\section{Special determinants} \label{sec:dets}

\subsection{The $D$-conditions} \label{sec:dets:simple}

\begin{definition} \label{def:dets}
For a polynomial $f$ of degree $n$, define the determinant $D_k^\pm(f)$ for each $1 \leq k \leq n$ and both signs $+$ or $-$ as a function of the coefficients of $f$ as follows. The size of $D_k^\pm(f)$ is $k \times k$, and the element in the $i$th row and $j$th column is the following:
\begin{equation*}
	d_{ij} = a_{j-i} \pm a_{i+j+n-k-1} \quad (1 \leq i,j \leq k)
\end{equation*}
with the convention that indices outside the allowed range indicate zero values, i.e.\ $a_i = 0$ for $i < 0$ and $i > n$.
\end{definition}
For example for $n=7$ and $k=6$:
\begin{equation*}
\scalebox{0.8}{$
\setlength\arraycolsep{5pt}
D_6^-(f) = \begin{vmatrix}
	a_0-a_2 & a_1-a_3 & a_2-a_4 & a_3-a_5 & a_4-a_6 & a_5-a_7 \\
	-a_3 & a_0-a_4 & a_1-a_5 & a_2-a_6 & a_3-a_7 & a_4 \\
	-a_4 & -a_5 & a_0-a_6 & a_1-a_7 & a_2 & a_3 \\
	-a_5 & -a_6 & -a_7 & a_0 & a_1 & a_2 \\
	-a_6 & -a_7 & & & a_0 & a_1 \\
	-a_7 & & & & & a_0
\end{vmatrix}.
$}
\end{equation*}

Then we have the following characterisation of the expansivity of $f$:

\begin{theorem} \label{thm:exp-dets}
Assume that $f$ has real coefficients, i.e.\ all $a_k \in \mathbb{R}$, and $a_0 > 0$. Then:
\begin{enumerate}
	\item $f$ is expansive if and only if for all $k$ between $1 \leq k \leq n$ and for both signs $+$ and $-$: $D_k^\pm(f) > 0$ ($D$-conditions).
	\item The $D$-conditions for $k = n$ can be replaced by the simpler $f(\pm 1) > 0$.
	\item Furthermore, the $D$-conditions are required only for every second $k$, i.e.: \begin{enumerate}
		\item only $k = n, n-2, n-4, \ldots$ are required for statement (1)\ and
		\item only $k = n-1, n-3, n-5, \ldots$ for statement (2).
	\end{enumerate}
\end{enumerate}
\end{theorem}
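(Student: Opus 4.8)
The plan is to prove all three statements simultaneously by induction on $n$, using the Schur--Cohn recursion already available from Lemma~\ref{thm:schur}: since $a_0>0$, the polynomial $f$ is expansive if and only if $|a_n|<|a_0|$ and its (real) Schur transform $g$, of degree at most $n-1$, is expansive. The base case $n=1$ is immediate, since there $D_1^+(f)=a_0+a_1=f(1)$ and $D_1^-(f)=a_0-a_1=f(-1)$, and $a_0>0$ makes $f(\pm1)>0$ equivalent to $|a_0|>|a_1|$. For general $n$ the smallest determinant is $D_1^\pm(f)=a_0\pm a_n$, so the pair of conditions $D_1^+(f)>0$ and $D_1^-(f)>0$ is exactly $|a_n|<|a_0|$, which anchors the induction.

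The technical heart is a determinant identity relating the $D$-determinants of $f$ to those of $g$. Writing $g(x)=b_{n-1}x^{n-1}+\dots+b_0$ with $b_k=a_0a_k-a_na_{n-k}$, I expect to prove that for every $2\le k\le n$,
\[
 D_{k-1}^{\pm}(g)=(a_0\mp a_n)^{\,k-2}\,D_k^{\pm}(f),
\]
where the signs are crossed ($D^+$ carries the factor $a_0-a_n$, and $D^-$ the factor $a_0+a_n$); I have verified this directly for small $n$ and $k$. To establish it in general I would exploit the structure of the defining matrix, which is a sum of an upper-triangular Toeplitz matrix (symbol $f$, diagonal $a_0$) and a Hankel matrix built from the same coefficients. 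The point is to relate the $(k-1)\times(k-1)$ matrix of $g$---whose entries are the Schur combinations $b=a_0a-a_na$ of the entries of $f$'s matrix---to the matrix of $f$ by a fixed sequence of elementary row and column operations, which extract exactly $k-2$ factors of $(a_0\mp a_n)$. Granting the identity, statement~1 follows by induction: once $D_1^\pm(f)>0$ we have $a_0\pm a_n>0$, so the factors $(a_0\mp a_n)^{k-2}$ are positive and $D_k^\pm(f)>0\Leftrightarrow D_{k-1}^\pm(g)>0$ for $2\le k\le n$; thus the full set of $D$-conditions for $f$ is equivalent to $|a_n|<|a_0|$ together with the full set for $g$, i.e.\ to expansivity of $f$.

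For statement~2, summing the coefficients of $g$ gives $g(1)=(a_0-a_n)f(1)$ and $g(-1)=(a_0-(-1)^na_n)f(-1)$. Hence, under the smaller conditions (which force $a_0>|a_n|$ and make these scalar factors positive), the $k=n$ case of the identity combined with statement~2 applied inductively to the top determinant of $g$ shows that the sign of $D_n^\pm(f)$ agrees with the sign of $f(\pm1)$; this is exactly what is needed to replace the $k=n$ conditions by $f(\pm1)>0$.

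The main obstacle is statement~3, the reduction to every second $k$: the conditions are genuinely overdetermined ($2n$ inequalities for an $n$-dimensional positivity region), so one must show that positivity of the retained determinants forces positivity of the skipped ones given only $a_0>0$. I would attack this with a three-term (Dodgson/Sylvester-type) identity linking $D_{k-1}^\pm$, $D_k^\pm$, $D_{k+1}^\pm$ and their opposite-sign counterparts, arranged so that a skipped determinant is a combination of retained ones with a definite sign on the region carved out by the retained conditions; alternatively, a connectedness argument showing the skipped determinants cannot vanish there and are positive at a reference expansive polynomial. The parity split between statement~1 ($k=n,n-2,\dots$) and statement~2 ($k=n-1,n-3,\dots$) should emerge from the index shift by one at each Schur step. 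Finally, the degenerate case $b_{n-1}=0$, where $\deg g<n-1$ (this genuinely occurs for expansive $f$, e.g.\ $f=a_nx^n+a_0$), requires separate care, so the induction must be carried out with $g$ regarded as a polynomial of formal degree $n-1$ with a possibly vanishing leading coefficient.
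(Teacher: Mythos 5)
Your treatment of statements 1 and 2 follows essentially the same route as the paper: a Schur--Cohn induction anchored at $D_1^\pm(f)=a_0\pm a_n>0$, driven by a determinant identity expressing $D_{k-1}^\pm(g)$ as a positive multiple of $D_k^\pm(f)$, with the degenerate case $\deg g<n-1$ handled by a formal (pseudo-)degree. One correction: your claimed factor $(a_0\mp a_n)^{k-2}$ is wrong for $k\ge 4$. The correct factor, which the paper obtains by unfolding the $(k-1)\times(k-1)$ determinant of $2\times2$ minors into a $(2k-2)\times(2k-2)$ determinant and block-triangularizing it, is $(a_0^2-a_n^2)^{(k-2)/2}$ for even $k$ and $(a_0^2-a_n^2)^{(k-3)/2}(a_0\mp a_n)$ for odd $k$ (your sign-crossing is right, but the even powers pair up into $a_0^2-a_n^2$). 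This does not damage your argument, since either expression is positive once $|a_n|<|a_0|$, but it means your ``verified for small $n$ and $k$'' only covers $k\le 3$, and the general identity still needs an actual proof; the paper's expansion-and-rearrangement argument is the missing ingredient. Your inductive route to statement 2 via $g(\pm1)=(a_0-(\pm1)^na_n)f(\pm1)$ is a legitimate alternative to the paper's direct factorization $D_n^\pm(f)=f(\pm1)\,D_{n-1}^-(f)$.

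The genuine gap is statement 3, which you flag as the main obstacle and propose to attack with Dodgson/Sylvester three-term identities or a connectedness argument --- neither of which you carry out, and neither of which is needed. Statement 3 does not require showing that the retained determinants force positivity of the skipped ones in general; it only asserts that the thinned set of conditions still implies expansivity, and this is inherited through the very same induction you already set up: the index shift $k\mapsto k-1$ at each Schur step carries the retained set $\{n,n-2,\dots\}$ for $f$ exactly onto the retained set $\{n-1,n-3,\dots\}$ for $g$. The only point that needs separate care is the anchor: the induction step requires $|a_n|<|a_0|$, i.e.\ $D_1^\pm(f)>0$, and for one parity of $n$ this condition is not among the retained ones. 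It is supplied by the single elementary implication $D_2^\pm(f)>0\Rightarrow D_1^\pm(f)>0$: indeed $D_2^\pm(f)=a_0(a_0\pm a_{n-1})-a_n(a_n\pm a_1)>0$ for both signs gives $a_0^2-a_n^2>|a_0a_{n-1}-a_na_1|\ge 0$, hence $|a_n|<|a_0|$. Without this observation (or some substitute), your proposal leaves statement 3 unproven.
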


Note that the assumption $a_0 > 0$ makes no real restriction, since if $a_0 < 0$, the polynomial can be multiplied by $-1$ without changing its roots, and if $a_0 = 0$, the polynomial is not expansive.

\begin{proof}

We use the Schur--Cohn test as described in Section~\ref{sec:former}. Within this proof, we release the assumption that $a_n \neq 0$, so allow $\deg f < n$. Note that Lemma~\ref{thm:schur} about the Schur transform is still (trivially) true in this case (provided that $a_0 \neq 0$, which is assumed in this theorem). This allows a technical simplification of the Schur--Cohn test: we can always reduce $n$ by exactly $1$, even if the degree drops by more. For unambiguity, we indicate $n$ on the $D$-conditions by writing $D_{n,k}^\pm$ instead of $D_k^\pm$ in this proof.

(1) The first statement is proved by induction on $n$. By Lemma~\ref{thm:schur}, the expansivity of $f$ is equivalent to $|a_n| < |a_0|$ and that its Schur transform $g$ is expansive, so in the induction step, we need to prove that
\begin{equation} \label{D-induction}
\begin{aligned}
	&\forall k \in \{1, 2, \ldots, n-1, n\}: D_{n,k}^\pm(f) > 0 \;\Longleftrightarrow\; \\
	\;\Longleftrightarrow\; |a_n| < |a_0| \;\wedge\; &\forall k \in \{1, 2, \ldots, n-1\}: D_{n-1,k}^\pm(g) > 0.
\end{aligned}
\end{equation}

For this, it is sufficient to prove both that
\begin{equation} \label{D-first}
	D_{n,1}^\pm(f) > 0 \Longleftrightarrow |a_n| < |a_0|
\end{equation}
and that for each $k$ between $2 \leq k \leq n$ and for both signs:
\begin{equation} \label{D-ind-part}
	|a_n| < |a_0| \Longrightarrow \left( D_{n,k}^\pm(f) > 0 \Longleftrightarrow D_{n-1,k-1}^\pm(g) > 0 \right).
\end{equation}

Since $D_{n,1}^\pm(f) = a_0 \pm a_n$ and $a_0 > 0$ is assumed, (\ref{D-first}) is trivially true.

For $n=1$, (\ref{D-first}) is the only condition.

Now we prove (\ref{D-ind-part}). Recall from Definition~\ref{def:schur} that the coefficients of $g$ are $b_k = a_0 a_k - a_n a_{n-k}$. Therefore, by Definition~\ref{def:dets}, the elements of the determinant $D_{n-1,k-1}^\pm(g)$ are (for $1 \leq i,j \leq k-1$):
\begin{equation} \label{g-dij}
\begin{aligned}
	d_{ij} &= b_{j-i} \pm b_{i+j+n-k-1} = \\
	&= a_0 (a_{j-i} \pm a_{n+i+j-k-1}) - a_n (a_{n+i-j} \pm a_{-i-j+k+1}).
\end{aligned}
\end{equation}
We claim that this has the same determinant as the following $(2k-2) \times (2k-2)$ matrix ($D^{(0)}$):
\begin{equation} \label{d0-ij}
d_{ij}^{(0)} = \begin{cases}
	a_{j-i} \pm a_{n+i+j-k-1} & (1 \leq i, j \leq k-1), \\
	a_{-i-j+2k} \pm a_{n+i-j+k-1} & (1 \leq i \leq k-1,\; k \leq j \leq 2k-2), \\
	\pm a_n \delta_{i-k+1,j} & (k \leq i \leq 2k-2,\; 1 \leq j \leq k-1), \\
	a_0 \delta_{i,j} & (k \leq i, j \leq 2k-2).
\end{cases}
\end{equation}
We can check this by clearing the bottom-left quarter of $D^{(0)}$ using column-transformations: add $\mp a_n/a_0$ times the columns $k \leq j \leq 2k-2$ to the columns $1 \leq j \leq k-1$ respectively, then it is easy to check that the top left quarter is (\ref{g-dij}), but multiplied by $1/a_0$, the bottom left quarter is zero, and the bottom right quarter is a diagonal matrix with all $a_0$, therefore the determinant indeed equals $D_{n-1,k-1}^\pm(g)$.

Now notice that $d_{i,j}^{(0)} = d_{i,2k-j}^{(0)}$ for $i \leq k$ and $j \geq 2$, so we can subtract the $j$th column from the $(2k-j)$th one for $2 \leq j \leq k-1$ to get $D^{(1)}$:
\begin{equation*}
d^{(1)}_{ij} = \begin{cases}
	a_{j-i} \pm a_{n+i+j-k-1} & (1 \leq i, j \leq k), \\
	0 & (1 \leq i \leq k,\; k+1 \leq j \leq 2k-2), \\
	\pm a_n \delta_{i-k+1,j} & (k+1 \leq i \leq 2k-2,\; 1 \leq j \leq k), \\
	a_0 \delta_{i,j} \mp a_n \delta_{i-k+1,2k-j} & (k+1 \leq i, j \leq 2k-2).
\end{cases}
\end{equation*}
Its determinant is the product of the top left $k \times k$ minor, which is exactly $D_{n,k}^\pm(f)$, and the bottom right $(k-2) \times (k-2)$ minor, which is the following:
\begin{equation*}
\scalebox{0.75}{$
\begin{vmatrix}
	a_0 & & & & \mp a_n \\
	& a_0 & & \mp a_n & \\
	& & \ddots & & \\
	& \mp a_n & & a_0 & \\
	\mp a_n & & & & a_0
\end{vmatrix}_{(k-2) \times (k-2)}
=
\begin{cases}
	\left( a_0^2 - a_n^2 \right)^{\frac{k-2}{2}} & (2 \mid k), \\
	\left( a_0^2 - a_n^2 \right)^{\frac{k-3}{2}} (a_0 \mp a_n) & (2 \nmid k).
\end{cases}
$}
\end{equation*}
This determinant is positive, because $|a_n| < |a_0|$ is assumed, so we can conclude that the original determinant ($D_{n-1,k-1}^\pm(g)$) and $D_{n,k}^\pm(f)$ has the same sign. This proves (\ref{D-ind-part}), which finishes the first part of the proof.

(2) For the second statement, we prove that $D_{n,n}^\pm(f) = f(\pm 1) D_{n,n-1}^-(f)$. Then, since $D_{n,n-1}^-(f) > 0$ is one of the remaining $D$-conditions, the other two expressions have the same sign. We perform a similarity transformation on the matrix of $D_{n,n}^\pm(f)$ to get another matrix with the same determinant, from which the desired factorization is obvious. First we illustrate it by an example, which shows that $D_{4,4}^-(f) = f(-1) D_{4,3}^-(f)$:
\begin{equation*}
\setlength\arraycolsep{8pt}
\scalebox{0.8}{$
\begin{bmatrix}
	1 & -1 & 1 & -1 \\
	0 & 1 & -1 & 1 \\
	0 & 0 & 1 & -1 \\
	0 & 0 & 0 & 1
\end{bmatrix} \cdot
\begin{bmatrix}
	a_0-a_1 & a_1-a_2 & a_2-a_3 & a_3-a_4 \\
	-a_2 & a_0-a_3 & a_1-a_4 & a_2 \\
	-a_3 & -a_4 & a_0 & a_1 \\
	-a_4 & & & a_0
\end{bmatrix} \cdot
\begin{bmatrix}
	1 & 1 & 0 & 0 \\
	0 & 1 & 1 & 0 \\
	0 & 0 & 1 & 1 \\
	0 & 0 & 0 & 1
\end{bmatrix} =
$}
\end{equation*}
\begin{equation*}
\setlength\arraycolsep{8pt}
\scalebox{0.8}{$
= \begin{bmatrix}
	a_0-a_1+a_2-a_3+a_4 & & & \\
	-a_2+a_3-a_4 & a_0-a_2 & a_1-a_3 & a_2-a_4 \\
	a_4-a_3 & -a_3 & a_0-a_4 & a_1 \\
	-a_4 & -a_4 & & a_0
\end{bmatrix}.
$}
\end{equation*}

Generally, the similarity matrix on the right is an upper bidiagonal matrix whose main diagonal entries are $1$, and the diagonal above it contains $\mp 1$ (i.e.\ the opposite sign than that of $D_{n,n}^\pm(f)$). Its inverse (on the left) is an upper triangular matrix whose entries are $t_{i,j} = (\pm 1)^{j-i}$ for $i \leq j$. The entries of the transformed matrix (in the middle) are, by Definition~\ref{def:dets}, $d_{i,j} = a_{j-i} \pm a_{i+j-1}$. Then, the top left entry of the resulting matrix is indeed $f(\pm 1)$:
\begin{align*}
	d'_{1,1} = \sum_{l=1}^{n} (\pm 1)^{l-1} d_{l,1}
	= a_0 + \sum_{l=1}^{n} (\pm 1)^l a_l = f(\pm 1).
\end{align*}

We can also prove that the resulting matrix contains $D_{n,n-1}^-(f)$ in the bottom right corner, by calculating the elements from the second column ($j \geq 2$):
\begin{align*}
	d'_{i,j} &= \sum_{l=i}^{n} (\pm 1)^{l-i} (d_{l,j} \mp d_{l,j-1}) = \\
	&= \sum_{l=i}^{n} ( (\pm 1)^{l-i} (a_{j-l} - a_{l+j-2}) - (\pm 1)^{l-i+1} (a_{j-l-1} - a_{l+j-1}) ) = \\
	&= (\pm 1)^{i-i} (a_{j-i} - a_{i+j-2}) - (\pm 1)^{n-i+1} (a_{j-n-1} - a_{n+j-1}) = \\
	&= a_{j-i} - a_{i+j-2},
\end{align*}
which is indeed the appropriate element of $D_{n,n-1}^-(f)$ for $i \geq 2$, and it also shows that the first row is $0$ for $j \geq 2$. By the rules of determinants, it follows that $D_{n,n}^\pm(f) = f(\pm 1) D_{n,n-1}^-(f)$.

(3) For the third statement, we need to prove a modified version of (\ref{D-induction}), using (\ref{D-first}) and (\ref{D-ind-part}). We replace the sets $\{1, \ldots, n\}$ and $\{1, \ldots, n-1\}$ in (\ref{D-induction}) by some subsets, say $K$ and $K'$ respectively.

There are four cases, depending on the parity of $n$ and the two parts of the statement:
\begin{align}
	\text{(a)},\; 2 \mid n: \quad
		&K = \{2, 4, \ldots, n-2, n\},\; K' = \{1, 3, \ldots, n-3, n-1\}, \\
	\text{(a)},\; 2 \nmid n: \quad
		&K = \{1, 3, \ldots, n-2, n\},\; K' = \{2, 4, \ldots, n-3, n-1\}, \\
	\text{(b)},\; 2 \mid n: \quad
		&K = \{1, 3, \ldots, n-1, n\},\; K' = \{2, 4, \ldots, n-2, n-1\}, \\
	\text{(b)},\; 2 \nmid n: \quad
		&K = \{2, 4, \ldots, n-1, n\},\; K' = \{1, 3, \ldots, n-2, n-1\}.
\end{align}

Note that for part (b), we included $k = n$ so that we can replace it by $f(\pm 1) > 0$ in the same way as in the proof of the second statement (2).

Now the $\Longleftarrow$ direction of the modified (\ref{D-induction}) is true in all four cases, because it is sufficient that $\forall k \geq 2: k \in K \Longleftrightarrow k-1 \in K'$ (due to (\ref{D-first}) and (\ref{D-ind-part})). The $\Longrightarrow$ direction however does not work unless $1 \in K$, which is only true in two cases. In the other two, we prove that the $k=2$ case, i.e.\ $D_{n,2}^\pm(f) > 0$, implies the $k=1$ case, $D_{n,1}^\pm(f) > 0$:
\begin{align*}
	D_{n,2}^\pm(f) &= a_0(a_0 \pm a_{n-1}) - a_n(a_n \pm a_1) > 0 \;\Longleftrightarrow \\
	&\Longleftrightarrow\; a_0^2 - a_n^2 > |a_0 a_{n-1} - a_n a_1| \;\Longrightarrow \\
	&\Longrightarrow\; a_0^2 - a_n^2 > 0
	\;\Longleftrightarrow\; |a_n| < |a_0|
	\;\Longleftrightarrow\; D_{n,1}^\pm(f) > 0.
\end{align*}
\end{proof}

We emphasize again that this theorem works for real coefficients only. A crucial use of this assumption is in the proof of the equivalence (\ref{D-first}), when we converted the condition $|a_n| < a_0$ to $a_0 \pm a_n > 0$.

\subsection{Complexity of the algorithm} \label{sec:exp-time}

Theorem~\ref{thm:exp-dets} can be turned into an algorithm that decides the expansivity of a polynomial, by calculating the determinants $D_k^\pm(f)$ and checking their sign. Normally, with numerical calculation (i.e.\ with standard floating-point numbers) the Schur--Cohn test is faster. The latter however has problems when the coefficients are integers and exact calculation is performed (i.e.\ with multi-precision arithmetic). Then, the Schur transform $b_k := a_0 a_k - a_n a_{n-k}$ may double the length of the coefficients in each step, which may lead to exponential coefficient growth and therefore running time. On the contrary, we show that our algorithm with the $D$-conditions runs in polynomial time.

We use the Bareiss algorithm \cite{bareiss} to calculate the determinants involving integer coefficients. It is an exact (not numerical) algorithm based on Gaussian elimination, which does not allow coefficient explosion (similar to the Schur--Cohn test) nor perform costly gcd-calculations to simplify exact rational entries, but instead it performs smart simplifications to ensure moderate (polynomial) growth of the entries and therefore polynomial running time. More specifically, its running time for an $n \times n$ matrix with integer entries is $O( n^5 \log(nB)^2 )$, where each element is $|a_{ij}| \leq B$.

For deciding expansivity of a polynomial, the Bareiss algorithm is performed on $D_1^\pm(f), D_2^\pm(f), \ldots, D_{n-1}^\pm(f)$ as the second statement of Theorem~\ref{thm:exp-dets} ensures (and also $f(\pm1)$ is calculated, but it is negligible). The entries of the matrices are at most $2 H$ in absolute value (where $H := H(f)$, the height of $f$). The running time of the algorithm is therefore:
\begin{equation*}
	T_{D\text{-exp}}(n, H) = O\!\left( n^6 \log(n H)^2 \right).
\end{equation*}

This is the worst-case complexity, and it is realized when every condition needs to be checked, e.g.\ when the polynomial is expansive. On the contrary, when an easy condition fails, the check terminates quickly. It is therefore advisable to start with the simpliest conditions $D_1^\pm(f) > 0$, which are simply $|a_n| < |a_0|$, then continue with $f(\pm1) > 0$, then continue with larger and larger $D$-conditions. And also, due to the third statement of Theorem~\ref{thm:exp-dets}, every other $D$-condition may be skipped (i.e.\ $k = \ldots, n-5, n-3, n-1$ is sufficient), but still, for the first few $k$, it is suggested to check them all regardless of parity, because they may give a quick negative result.

\subsection{The $D$-polynomials} \label{sec:dets:poly}

We can extend Theorem~\ref{thm:exp-dets} from expansivity to the property that all roots have greater absolute value than a given constant, by replacing the expressions $D_k^\pm(f)$ by polynomials:

\begin{definition} \label{def:d-poly}
Define $\widetilde{D}_k^\pm(f)(x)$ as $D_k^\pm(f)$ in Definition~\ref{def:dets} except that all $a_j$ are replaced by $a_j x^j$.
\end{definition}
For example for $n=7$ and $k=6$:
\begin{equation*}
\scalebox{0.8}{$
\setlength\arraycolsep{5pt}
\widetilde{D}_6^-(f)(x) = \begin{vmatrix}
	a_0-a_2 x^2 & a_1 x-a_3 x^3 & a_2 x^2-a_4 x^4 & a_3 x^3-a_5 x^5 & a_4 x^4-a_6 x^6 & a_5 x^5-a_7 x^7 \\
	-a_3 x^3 & a_0-a_4 x^4 & a_1 x-a_5 x^5 & a_2 x^2-a_6 x^6 & a_3 x^3-a_7 x^7 & a_4 x^4 \\
	-a_4 x^4 & -a_5 x^5 & a_0-a_6 x^6 & a_1 x-a_7 x^7 & a_2 x^2 & a_3 x^3 \\
	-a_5 x^5 & -a_6 x^6 & -a_7 x^7 & a_0 & a_1 x & a_2 x^2 \\
	-a_6 x^6 & -a_7 x^7 & & & a_0 & a_1 x \\
	-a_7 x^7 & & & & & a_0
\end{vmatrix}.
$}
\end{equation*}

Note that there is a direct connection between $D$ and $\widetilde{D}$: first, $\widetilde{D}_k^\pm(f)(1) = D_k^\pm(f)$, and in the other direction, let $f_y(x) := f(xy)$, then $D_k^\pm(f_y) = \widetilde{D}_k^\pm(f)(y)$.

Then we have:
\begin{corollary}
If $f$ has real coefficients with $a_0 > 0$, then for any $s > 0$, $f$ has all its roots $|\alpha_i| > s$ if and only if similar conditions hold as in Theorem~\ref{thm:exp-dets}, but the expressions $D_k^\pm(f)$ are replaced by $\widetilde{D}_k^\pm(f)(s)$, and $f(\pm 1)$ is replaced by $f(\pm s)$.
\end{corollary}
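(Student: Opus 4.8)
The plan is to reduce the corollary to Theorem~\ref{thm:exp-dets} by a change of variable, exploiting the scaling relationship already noted between $D$ and $\widetilde{D}$. The key observation is that $f$ has all its roots satisfying $|\alpha_i| > s$ if and only if the rescaled polynomial $f_s(x) := f(sx)$ is expansive in the original sense. Indeed, the roots of $f_s$ are precisely $\alpha_i/s$, so $|\alpha_i| > s \Longleftrightarrow |\alpha_i/s| > 1$ for every $i$, which is exactly the expansivity of $f_s$. This is the conceptual heart of the argument, and once it is in place the rest is bookkeeping.

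First I would verify that the hypotheses of Theorem~\ref{thm:exp-dets} transfer to $f_s$. Writing $f(x) = \sum_{i=0}^n a_i x^i$, the coefficients of $f_s$ are $a_i s^i$, which are again real, and its constant term is $a_0 s^0 = a_0 > 0$ since $s > 0$; so the theorem applies to $f_s$ verbatim. Thus $f_s$ is expansive if and only if $D_k^\pm(f_s) > 0$ for all relevant $k$ and both signs, with the $k = n$ conditions replaceable by $f_s(\pm 1) > 0$, and with only every second $k$ required.

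Next I would translate each of these conditions back in terms of $f$ and $s$ using the two identities already recorded before the corollary. The relation $D_k^\pm(f_y) = \widetilde{D}_k^\pm(f)(y)$, taken at $y = s$, immediately gives $D_k^\pm(f_s) = \widetilde{D}_k^\pm(f)(s)$, so each determinant condition on $f_s$ becomes the stated condition $\widetilde{D}_k^\pm(f)(s) > 0$. For the boundary case, $f_s(\pm 1) = f(\pm s)$ directly by the definition of $f_s$, matching the claimed replacement of $f(\pm 1)$ by $f(\pm s)$. Stringing these equalities together, the full list of conditions from Theorem~\ref{thm:exp-dets} applied to $f_s$ is identical, term by term, to the list asserted in the corollary, and the every-second-$k$ reduction carries over unchanged since it depends only on $n = \deg f = \deg f_s$.

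I do not expect a serious obstacle here: the corollary is essentially a corollary in the strict sense, and the only point requiring a little care is to confirm that the scaling identity $D_k^\pm(f_y) = \widetilde{D}_k^\pm(f)(y)$ genuinely holds entrywise rather than merely up to an overall power of $y$. Checking this amounts to observing that replacing $a_j$ by $a_j y^j$ in Definition~\ref{def:dets} is exactly what forming $f_y$ and reading off its coefficients does, which is precisely how $\widetilde{D}$ was defined in Definition~\ref{def:d-poly}; so the identity is built into the definitions and needs no further computation. The mild subtlety worth stating explicitly is that the sign (positivity) of the conditions is what transfers, and since $s > 0$ the substitution introduces no spurious sign changes, so the biconditional is preserved in both directions.
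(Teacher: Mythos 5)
Your proposal is correct and follows exactly the paper's own argument: apply Theorem~\ref{thm:exp-dets} to the rescaled polynomial $f_s(x) := f(sx)$ and translate the conditions back via $D_k^\pm(f_s) = \widetilde{D}_k^\pm(f)(s)$ and $f_s(\pm 1) = f(\pm s)$. The additional checks you carry out (that $f_s$ has real coefficients with positive constant term, and that the scaling identity holds entrywise) are sensible but routine, and the paper leaves them implicit.
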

\begin{proof}
Apply Theorem~\ref{thm:exp-dets} for $f_s(x) := f(sx)$, then the condition $|\alpha_i| > s$ for the roots of $f$ is equivalent to expansivity of $f_s$.
\end{proof}

Some properties of the $D$-polynomials:
\begin{lemma} \label{thm:d-polys} \ \\ \vspace{-1\baselineskip}
\begin{enumerate}
	\item $\widetilde{D}_k^\pm(f)(x)$ has constant term $a_0^k$ and leading term $a_n^k x^{k n}$, the latter having a sign dependent on the parameters ($k$ and $\pm$).
	\item For $k = n-1, n-3, n-5, \ldots$, $\widetilde{D}_k^\pm(f)(x)$ has only even powers of $x$.
	\item The polynomial $\widetilde{D}_{n-1}^-(f)(x^{1/2})$ has the following $\binom{n}{2}$ roots: $\alpha_i \alpha_j$ for all $i < j$.
\end{enumerate}
\end{lemma}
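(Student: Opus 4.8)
The plan is to handle the three statements in increasing order of difficulty, dispatching the first two by direct inspection of the Leibniz expansion $\widetilde{D}_k^\pm(f)(x) = \sum_\sigma \operatorname{sgn}(\sigma)\prod_i \tilde{d}_{i,\sigma(i)}$, where by Definition~\ref{def:d-poly} the entry is $\tilde{d}_{ij} = a_{j-i}x^{j-i} \pm a_{i+j+n-k-1}x^{i+j+n-k-1}$. For statement~1, setting $x=0$ kills every $\pm$-term (its exponent $i+j+n-k-1 \ge n-k+1 > 0$) and leaves the diagonal of $a_0$'s, giving constant term $a_0^k$. For the leading term, the $\pm$-term strictly dominates in each entry (the exponent gap is $2i+n-k-1>0$), and summing the exponents $i+\sigma(i)+n-k-1$ over a permutation equals $kn$ only when every $i+\sigma(i)=k+1$, i.e.\ for the anti-diagonal $\sigma(i)=k+1-i$, where each entry contributes $\pm a_n x^n$; hence the top term is $(\pm1)^k\operatorname{sgn}(\sigma)\,a_n^k x^{kn}$, with sign depending on $k$ and the chosen $\pm$. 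For statement~2, when $n-k$ is odd the two exponents $j-i$ and $i+j+n-k-1$ share the parity of $i+j$, so $x\mapsto-x$ multiplies entry $(i,j)$ by $(-1)^{i+j}$, which is conjugation by $\operatorname{diag}((-1)^1,\dots,(-1)^k)$ and leaves the determinant fixed; thus $\widetilde{D}_k^\pm(f)(-x)=\widetilde{D}_k^\pm(f)(x)$ and only even powers survive.

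The core is statement~3. Using $D_k^\pm(f_y)=\widetilde{D}_k^\pm(f)(y)$ for $f_y(x):=f(yx)$ (whose roots are $\alpha_m/y$), and writing $P(t):=\widetilde{D}_{n-1}^-(f)(\sqrt t)$, statements~1--2 guarantee $P$ is a genuine polynomial of degree exactly $\binom n2$ with nonzero leading coefficient $\pm a_n^{\,n-1}$. It therefore suffices to show $P(\alpha_i\alpha_j)=0$ for each $i<j$, equivalently $D_{n-1}^-(f_y)=0$ whenever $y^2=\alpha_i\alpha_j$; for such $y$ the polynomial $f_y$ has a reciprocal pair of roots $\beta=\alpha_i/y$ and $\beta^{-1}=\alpha_j/y$. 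So I reduce everything to the vanishing criterion: \emph{if a degree-$n$ polynomial $h=\sum_k c_k x^k$ has two roots $\beta,\beta^{-1}$, then the $(n-1)\times(n-1)$ matrix $M=(c_{j-i}-c_{i+j})$ is singular} (recall that for $k=n-1$ the index $i+j+n-k-1$ collapses to $i+j$).

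I would prove this criterion by exhibiting an explicit null vector. Factor $h=(x^2-px+1)\,\tilde r(x)$ with $p=\beta+\beta^{-1}$ and $\tilde r(x)=\sum_{m=0}^{n-2}\rho_m x^m$, and set $v=(\rho_0,\dots,\rho_{n-2})$, which is nonzero since $\rho_{n-2}=c_n\ne0$. Then $(Mv)_i=\sum_m\rho_m c_{m+1-i}-\sum_m\rho_m c_{m+i+1}$, and both sums are coefficients of the symmetric Laurent polynomial $W(x):=\tilde r(x)\tilde r(1/x)$: using $h(x)=(x^2-px+1)\tilde r(x)$, the first sum is the coefficient of $x^{i-1}$ in $\tilde r(x)h(1/x)$ and the second the coefficient of $x^{i+1}$ in $\tilde r(1/x)h(x)$, each reducing to $W_{i+1}-pW_i+W_{i-1}$. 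Hence $(Mv)_i=0$ for all $i$ and $\det M=0$. The expected main obstacle is exactly the bookkeeping here — matching the truncated index ranges of the determinant to the clean generating-function identity — but the symmetry $W(x)=W(1/x)$ forces the two sums to collapse to the same expression, so they cancel identically.

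Finally I would assemble the pieces: the criterion produces $\binom n2$ roots $\alpha_i\alpha_j$ of $P$, which for distinct products exhaust the degree-$\binom n2$ polynomial, so these are all of its roots; the remaining coincident-root cases follow by continuity, since both sides are polynomial in the $\alpha_i$. Comparing against the leading coefficient from statement~1 would, if desired, upgrade this to the exact identity $\widetilde{D}_{n-1}^-(f)(y)=\pm a_n^{\,n-1}\prod_{i<j}(\alpha_i\alpha_j-y^2)$.
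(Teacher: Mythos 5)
Your treatment of statements 1 and 2 is essentially the paper's (the paper proves 2 by rescaling row $i$ by $x^{i-1}$ and column $j$ by $x^{-(j-1)}$ rather than by your conjugation-by-$\operatorname{diag}((-1)^i)$ argument, but these are the same parity computation). One small repair in statement 1: the sum $\sum_i \bigl(i+\sigma(i)+n-k-1\bigr)$ equals $kn$ for \emph{every} permutation $\sigma$, not only the anti-diagonal; what actually singles out the anti-diagonal is that $a_{i+\sigma(i)+n-k-1}$ vanishes unless $i+\sigma(i)\leq k+1$, and since $\sum_i (i+\sigma(i))=k(k+1)$ this forces $i+\sigma(i)=k+1$ for all $i$. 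With that fix the conclusion stands.

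For statement 3 you take a genuinely different route, and it is correct. The paper forms the resultant $F(x)=\operatorname{res}_y\!\left(f(y),f(x/y)y^n\right)$, whose roots are all $n^2$ products $\alpha_i\alpha_j$, writes it as a $2n\times 2n$ Sylvester-type determinant, and block-triangularizes it by an explicit similarity to obtain $F(x^2)=(-1)^n\widetilde{D}_n^+(f)(x)\,\widetilde{D}_n^-(f)(x)$; combining this with the factorization $D_n^\pm(f)=f(\pm1)D_{n-1}^-(f)$ established in the proof of Theorem~\ref{thm:exp-dets} gives $F(x^2)=(-1)^n f(x)f(-x)\bigl(\widetilde{D}_{n-1}^-(f)(x)\bigr)^2$, and the root identification follows by matching multiplicities. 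You instead prove a direct vanishing criterion: if $h$ has a reciprocal pair of roots, the matrix underlying $D_{n-1}^-(h)$ annihilates the coefficient vector of $h(x)/(x^2-px+1)$, verified by your generating-function identity (which checks out, including the truncation bookkeeping, since the out-of-range coefficients are zero by convention); the scaling relation $D_{n-1}^-(f_y)=\widetilde{D}_{n-1}^-(f)(y)$ plants such a pair exactly when $y^2=\alpha_i\alpha_j$, and the degree count from statements 1--2 plus a continuity/Zariski-density argument finishes. Your argument is more elementary and self-contained (no resultants, no $2n\times2n$ block manipulation), and the kernel-vector criterion is of independent interest; the paper's route buys more, namely the full factorization of $F(x^2)$ and the interpretation of $\widetilde{D}_n^{\pm}(f)(x)$ alongside $\widetilde{D}_{n-1}^-(f)(x)$, which the paper reuses later (e.g.\ in Theorem~\ref{thm:bound-A}).
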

\begin{proof}
(1) Each row of the $k \times k$ determinant $\widetilde{D}_k^\pm(f)(x)$ has a unique term with miminal and maximal $x$-power, namely $a_0$ and $\pm a_n x^n$. The formers are in the main diagonal, yielding the constant term $a_0^k$, and the latters are in the antidiagonal, yielding $(-1)^{\lfloor k/2 \rfloor} (\pm 1)^k a_n^k x^{k n}$ as the leading term.

(2) By Definition~\ref{def:dets}, the determinant $\widetilde{D}_k^\pm(f)(x)$ has entries $d_{ij} = a_{j-i} x^{j-i} \pm a_{i+j+n-k-1} x^{i+j+n-k-1}$. Multiplying the $i$th row by $x^{i-1}$ and dividing the $j$th column by $x^{j-1}$ gives an other representation of the same determinant: $d'_{ij} = a_{j-i} \pm a_{i+j+n-k-1} x^{2i+n-k-1}$, from which it is obvious that if $n-k-1$ is even, it has only even powers of $x$. (Note that for $x=0$, the transformation does not work, but then the two determinants are trivially equal.)

(3) We construct an other polynomial of degree $n^2$ whose roots are $\alpha_i \alpha_j$ for all $1 \leq i,j \leq n$ (it easily follows from the rules of resultants) \cite[p.~159]{cohen}:
\begin{equation*}
	F(x) = \operatorname{res}_y\! \left( f(y), f\left(\frac{x}{y}\right) y^n \right) = a_n^{2n} \prod_{i=1}^{n} \prod_{j=1}^{n} (x - \alpha_i \alpha_j).
\end{equation*}
Up to sign, it can be represented by the following $2n \times 2n$ determinant, which is a slightly modified version of the Sylvester matrix corresponding to the resultant above:
\begin{equation*}
	(-1)^n F(x) =
	\begin{vmatrix}
		a_0 & & & & a_n & & & \\
		a_1 x & a_0 & & & a_{n-1} & a_n & & \\
		a_2 x^2 & a_1 x & \ddots & & \vdots & a_{n-1} & \ddots & \\
		\vdots & a_2 x^2 & \ddots & a_0 & a_1 & \vdots & \ddots & a_n \\
		a_n x^n & \vdots & \ddots & a_1 x & a_0 & a_1 &        & a_{n-1} \\
		& a_n x^n & & a_2 x^2 & & a_0 & \ddots & \vdots \\
		& & \ddots & \vdots & & & \ddots & a_1 \\
		& & & a_n x^n & & & & a_0
	\end{vmatrix}.
\end{equation*}

We will factor $F(x^2)$ by applying a similarity transformation on this matrix but with $x^2$ instead of $x$. First we illustrate the idea for $n = 3$: \vspace{-1.5ex}

\begin{equation*}
\scalebox{0.8}{$\begin{gathered}
	\begin{bmatrix}
		x^2 & & & & & \\
		& x & & & & \\
		& & 1 & & & \\
		& & x & 1 & & \\
		& x^2 & & & x^{-1} & \\
		x^3 & & & & & x^{-2}
	\end{bmatrix} \cdot
	\begin{bmatrix}
		a_0 & & & a_3 & & \\
		a_1 x^2 & a_0 & & a_2 & a_3 & \\
		a_2 x^4 & a_1 x^2 & a_0 & a_1 & a_2 & a_3 \\
		a_3 x^6 & a_2 x^4 & a_1 x^2 & a_0 & a_1 & a_2 \\
		& a_3 x^6 & a_2 x^4 & & a_0 & a_1 \\
		& & a_3 x^6 & & & a_0
	\end{bmatrix} \cdot
	\begin{bmatrix}
		x^{-2} & & & & &\\
		& x^{-1} & & & & \\
		& & 1 & & & \\
		& & -x & 1 & & \\
		& -x^2 & & & x & \\
		-x^3 & & & & & x^2
	\end{bmatrix} = \\
	= \begin{bmatrix}
		a_0 & & -a_3 x^3 & a_3 x^2 & & \\
		a_1 x & a_0 - a_3 x^3 & -a_2 x^2 & a_2 x & a_3 x^2 & \\
		a_2 x^2 - a_3 x^3 & a_1 x - a_2 x^2 & a_0 - a_1 x & a_1 & a_2 x & a_3 x^2 \\
		& & & a_1 x + a_0 & a_2 x^2 + a_1 x & a_3 x^3 + a_2 x^2 \\
		& & & a_2 x^2 & a_3 x^3 + a_0 & a_1 x \\
		& & & a_3 x^3 & & a_0
	\end{bmatrix}.
\end{gathered}$}
\end{equation*}

Generally, calling this as $U M V = N$ with $U = V^{-1}$, the entries are:
\begin{equation*}
	m_{ij} = \begin{cases}
		a_{i-j} x^{2i-2j}   & (1 \leq j \leq n), \\
		a_{j-i}             & (n+1 \leq j \leq 2n),
	\end{cases}
\end{equation*}
\begin{equation*}
	u_{ij}; v_{ij} = \begin{cases}
		x^{n-j}; x^{j-n} & (1 \leq i = j \leq n), \\
		x^{n-j+1}; x^{j-n-1} & (n+1 \leq i = j \leq 2n), \\
		x^{n-j+1}; -x^{n-j+1} & (i = 2n-j+1, 1 \leq j \leq n), \\
		0 & (\text{otherwise}).
	\end{cases}
\end{equation*}

Then straightforward calculation gives:
\begin{equation*}
	(U M)_{ij} = \begin{cases}
		a_{i-j} x^{n+i-2j} & (1 \leq i,j \leq n), \\
		a_{j-i} x^{n-i} & (1 \leq i \leq n, n+1 \leq j \leq 2n), \\
		a_{i-j} x^{n+i-2j+1} + a_{2n-i-j+1} x^{3n-i-2j+2} & (n+1 \leq i \leq 2n, 1 \leq j \leq n), \\
		a_{j-i} x^{n-i+1} + a_{i+j-2n-1} x^{i-n} & (n+1 \leq i,j \leq 2n),
	\end{cases}
\end{equation*}
\begin{equation*}
	(U M V)_{ij} = \begin{cases}
		a_{i-j} x^{i-j} - a_{2n-i-j+1} x^{2n-i-j+1} & (1 \leq i,j \leq n), \\
		a_{j-i} x^{j-i-1} & (1 \leq i \leq n, n+1 \leq j \leq 2n), \\
		0 & (n+1 \leq i \leq 2n, 1 \leq j \leq n), \\
		a_{j-i} x^{j-i} + a_{i+j-2n-1} x^{i+j-2n-1} & (n+1 \leq i,j \leq 2n).
	\end{cases}
\end{equation*}

This confirms what the example suggests about the resulting matrix, $N$: the bottom right quadrant is the exact copy of $\widetilde{D}_n^+(f)(x)$, the top left quadrant is $\widetilde{D}_n^-(f)(x)$ turned upside down, and the bottom left quadrant is zero. (Note that again for $x=0$, the transformation fails, but then trivially $\det M = \det N$.) This proves that $F(x^2) = (-1)^n \widetilde{D}_n^+(f)(x) \cdot \widetilde{D}_n^-(f)(x)$.

In the proof of Theorem~\ref{thm:exp-dets} (2), we found that $D_n^\pm(f) = f(\pm 1) D_{n-1}^-(f)$. Using the connection between $D$ and $\widetilde{D}$, this factorization can be generalized to $D$-polynomials, which gives a finer factorization of $F(x^2)$:
\begin{equation*}
	F(x^2) = (-1)^n f(x) f(-x) \left( \widetilde{D}_{n-1}^-(f)(x) \right)^2.
\end{equation*}
Examine the first few factors:
\begin{equation*}
	(-1)^n f(x) f(-x) = a_n \prod_{i=1}^{n} (x - \alpha_i) \cdot a_n \prod_{i=1}^{n} (x + \alpha_i)
	= a_n^2 \prod_{i=1}^{n} (x^2 - \alpha_i^2).
\end{equation*}
This shows that the polynomial $f(x^{1/2}) f(-x^{1/2})$ has all $\alpha_i^2$ as roots. All other roots of $F(x)$ are double ($\alpha_i \alpha_j = \alpha_j \alpha_i$ with $i \neq j$), therefore $\widetilde{D}_{n-1}^-(f)(x^{1/2})$ has half of them, i.e.\ all $\alpha_i \alpha_j$ with $i < j$.
\end{proof}

\section{Bounds on the expansivity gap} \label{sec:bounds}

In this section we examine the following question: given an expansive polynomial with \emph{integer} coefficients, how close can the size of the roots be to $1$?
\begin{definition}
Let $f$ be an expansive polynomial (i.e.\ with roots $|\alpha_i| > 1$), then the \emph{expansivity gap} is:
\begin{equation*}
	\varepsilon := \min_{i=1}^{n} |\alpha_i| - 1.
\end{equation*}
\end{definition}
Our goal is to give a lower bound on $\varepsilon$ in terms of the degree $n$ and some other quantity measuring the complexity of $f$. For example, we can use the height and the length of the polynomial, as defined in Definition~\ref{def:measures}.

The expansivity gap is closely related to the distance of algebraic numbers (i.e.\ roots of integer polynomials) from $1$. If the root $\alpha$ of an expansive polynomial is real, then $|\alpha| - 1$ is either $\alpha - 1$ or $(-\alpha) - 1$, otherwise $|\alpha| = \sqrt{\alpha \overline\alpha}$, so when $\alpha$ is close to 1, $|\alpha| - 1 \approx \frac{\alpha \overline\alpha - 1}{2}$.

\subsection{Liouville-type bounds} \label{sec:bounds:liou}

A common tool for bounding distances between algebraic numbers is Liouville's inequality \cite[Prop.~3.14]{waldschmidt}:
\begin{theorem} \label{thm:liou}
If $f$ and $g$ are integer polynomials and $\alpha$ is a root of $f$ but not of $g$, the following bound holds:
\begin{equation*}
	\frac{1}{|g(\alpha)|} \leq L(g)^{n-1} M(f)^{\deg g},
\end{equation*}
where $L(g)$ is the length and $M(f)$ is the Mahler measure as defined in Definition~\ref{def:measures}.
\end{theorem}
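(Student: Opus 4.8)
The plan is to package all conjugates of $\alpha$ into a single integer quantity and bound that quantity below by $1$. Writing $f(x) = a_n \prod_{i=1}^n (x - \alpha_i)$ with $\alpha = \alpha_1$, I would form the product
\begin{equation*}
	R := a_n^{\deg g} \prod_{i=1}^n g(\alpha_i),
\end{equation*}
which is, up to sign, the resultant $\operatorname{res}(f, g)$. First I would argue that $R$ is an integer: after multiplication by the stated power of $a_n$ it is a symmetric function of the roots $\alpha_i$ with integer coefficients, equivalently it equals the Sylvester determinant of $f$ and $g$, whose entries are integers whenever $f, g \in \mathbb{Z}[x]$.

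Next I would establish $|R| \geq 1$. The resultant vanishes exactly when $f$ and $g$ share a root; since $\alpha_1$ is assumed not to be a root of $g$, non-vanishing follows as soon as $f$ and $g$ have no common root at all. This holds when $f$ is the minimal polynomial of $\alpha$, because irreducibility forces $f \mid g$ on any shared root, contradicting $g(\alpha) \neq 0$; this is the setting of the cited reference. Being a nonzero integer, $|R| \geq 1$.

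The analytic estimate is then routine. For each conjugate I would use the triangle inequality in the form
\begin{equation*}
	|g(\alpha_i)| = \left| \sum_{j=0}^{\deg g} b_j \alpha_i^j \right| \leq L(g)\, \max(1, |\alpha_i|)^{\deg g},
\end{equation*}
where $b_j$ are the coefficients of $g$. Isolating the factor $g(\alpha) = g(\alpha_1)$ and applying this bound to the remaining $n-1$ conjugates gives
\begin{equation*}
	1 \leq |R| \leq |g(\alpha)|\, L(g)^{n-1} |a_n|^{\deg g} \prod_{i=2}^{n} \max(1, |\alpha_i|)^{\deg g}.
\end{equation*}
Finally I would recognize the trailing product: since $|a_n|^{\deg g}\prod_{i=1}^{n}\max(1,|\alpha_i|)^{\deg g} = M(f)^{\deg g}$, and dropping the $i=1$ factor (which is $\geq 1$) only enlarges the bound, the right-hand side is at most $|g(\alpha)|\, L(g)^{n-1} M(f)^{\deg g}$. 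Dividing by $|g(\alpha)| > 0$ yields the claim.

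The main obstacle is the non-vanishing of $R$: the bare hypothesis that $\alpha$ is a root of $f$ but not of $g$ does not by itself exclude a common root of $f$ and $g$ distinct from $\alpha$, in which case $R = 0$ and the argument collapses. The clean way around this is to take $f$ irreducible (the minimal polynomial of $\alpha$), or equivalently to assume $\gcd(f, g) = 1$. Everything else — the integrality of the Sylvester determinant and the elementary length estimate — is standard.
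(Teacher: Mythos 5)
The paper does not prove this statement at all: it is quoted verbatim as a known result with the citation to Waldschmidt, so there is no internal proof to compare against. Your resultant argument is the standard route to Liouville's inequality, and the computation itself is correct: $R=a_n^{\deg g}\prod_i g(\alpha_i)$ is $\pm\operatorname{res}(f,g)$, hence an integer; the length estimate $|g(\alpha_i)|\leq L(g)\max(1,|\alpha_i|)^{\deg g}$ applied to the $n-1$ conjugates other than $\alpha$, together with $|a_n|^{\deg g}\prod_{i\ge 2}\max(1,|\alpha_i|)^{\deg g}\leq M(f)^{\deg g}$, gives exactly the claimed bound once $|R|\geq 1$ is known.

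The obstacle you flag --- that the stated hypotheses do not rule out a common root of $f$ and $g$ distinct from $\alpha$, in which case $R=0$ --- is real for the direct argument, but it does not make the theorem false, and it can be closed rather than assumed away. Replace $f$ by the minimal polynomial $f_0$ of $\alpha$ over $\mathbb{Z}$, of degree $d\leq n$. Irreducibility plus $g(\alpha)\neq 0$ forces $\operatorname{res}(f_0,g)\neq 0$, so your argument yields $1/|g(\alpha)|\leq L(g)^{d-1}M(f_0)^{\deg g}$. By Gauss's lemma $f=f_0 h$ with $h\in\mathbb{Z}[x]$, and the Mahler measure is multiplicative with $M(h)\geq |h_{\mathrm{lead}}|\geq 1$, so $M(f_0)\leq M(f)$; likewise $L(g)\geq 1$ for a nonzero integer polynomial, so $L(g)^{d-1}\leq L(g)^{n-1}$. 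Hence the bound for $f_0$ implies the bound for the original $f$, and the theorem holds exactly as stated. With that one reduction added, your proof is complete; as written, it proves only the irreducible (or coprime) case and leaves the general statement conditional.
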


Using Liouville's inequality, we give the following bounds on the expansivity gap $\varepsilon$, or rather on $1 / \varepsilon$ in order to keep the formulas simpler:
\begin{theorem} \label{thm:bound-A}
For an expansive polynomial $f$ with integer coefficients, and any root $\alpha$ of $f$, we have:
\begin{equation*}
	\frac{1}{|\alpha| - 1} \leq \begin{cases}
		2^{n-1} |a_0| & (\alpha \in \mathbb{R}), \\
		2^{\binom{n}{2}} |a_0|^{n-1} + 1 & (\alpha \in \mathbb{C} \setminus \mathbb{R}).
	\end{cases}
\end{equation*}
\end{theorem}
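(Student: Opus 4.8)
The plan is to apply Liouville's inequality (Theorem~\ref{thm:liou}) with a cleverly chosen auxiliary polynomial $g$ in each of the two cases. The key observation is that the expansivity gap for a root $\alpha$ is governed by how close $|\alpha|$ is to $1$, and the trick is to find an integer polynomial $g$ whose value at $\alpha$ is small precisely when $|\alpha|$ is close to $1$, so that the lower bound on $1/|g(\alpha)|$ translates into a lower bound on $1/(|\alpha|-1)$.

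For the real case, I would take $g(x) = x^2 - 1 = (x-1)(x+1)$, which has degree $\deg g = 2$ and length $L(g) = 2$. Since $\alpha$ is real with $|\alpha| > 1$, we have $|g(\alpha)| = |\alpha^2 - 1| = (|\alpha|-1)(|\alpha|+1)$. Liouville gives $1/|g(\alpha)| \leq L(g)^{n-1} M(f)^{\deg g} = 2^{n-1} M(f)^2$, but this has the wrong exponent on $|a_0|$ compared to the target $2^{n-1}|a_0|$. So instead I expect the right choice is the linear polynomial $g(x) = x^2 - 1$ handled more carefully, or rather splitting via $g(x) = x - \operatorname{sgn}(\alpha)$ of degree $1$: then $|g(\alpha)| = |\alpha| - 1$ directly, $L(g) = 2$, $\deg g = 1$, and Liouville yields $1/(|\alpha|-1) \leq 2^{n-1} M(f)$. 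Using $M(f) = |a_0|$ for expansive $f$ (noted after Definition~\ref{def:measures}) gives exactly $2^{n-1}|a_0|$. I must check that $\alpha$ is not a root of $g$, which holds since $|\alpha| > 1$ means $\alpha \neq \pm 1$.

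For the complex case, $\alpha = \operatorname{Re}\alpha + i\operatorname{Im}\alpha$ is a non-real root, so its conjugate $\overline\alpha$ is also a root, and the natural auxiliary polynomial is one whose value encodes $|\alpha|^2 - 1 = \alpha\overline\alpha - 1$. The plan is to build $g$ from the minimal-polynomial data of $\alpha\overline\alpha$ or to use the approximation $|\alpha| - 1 \approx (\alpha\overline\alpha - 1)/2$ noted in the text before Section~\ref{sec:bounds:liou}. Concretely I expect to apply Liouville to the integer polynomial whose roots are the products $\alpha_i\alpha_j$ (which is exactly $F$ from Lemma~\ref{thm:d-polys}, or its relevant factor), evaluated so that $|g(\alpha)|$ relates to $\alpha\overline\alpha - 1$. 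The larger exponent $\binom{n}{2}$ strongly suggests working with this degree-$\binom{n}{2}$ polynomial $\widetilde{D}_{n-1}^-(f)(x^{1/2})$ whose roots are the $\binom{n}{2}$ products $\alpha_i\alpha_j$ with $i<j$, treating $\alpha\overline\alpha$ as one such product. Applying Liouville there produces the factor $2^{\binom{n}{2}}$ from $L(g)^{n-1}$-type terms and $|a_0|^{n-1}$ from the Mahler measure raised to the appropriate power, with the $+1$ arising from converting the bound on $(|\alpha|^2-1)$ back to $(|\alpha|-1)$ via $|\alpha|+1 \leq |\alpha|-1 + 2$ or a similar rearrangement.

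The main obstacle I anticipate is the complex case: correctly identifying which integer polynomial $g$ to feed into Liouville so that the exponents come out as $\binom{n}{2}$ and $n-1$ rather than something larger, and then tracking the conversion from a bound on $|\alpha\overline\alpha - 1|$ (which is what the product-of-roots polynomial naturally controls) to a bound on $|\alpha| - 1$. This conversion is where the $+1$ and the halving factor must be reconciled carefully. I would also need to verify the non-vanishing hypothesis of Theorem~\ref{thm:liou} (that $\alpha$, or the relevant product, is genuinely not a root of the chosen $g$), and ensure the Mahler measure of the auxiliary polynomial is bounded in terms of $|a_0| = M(f)$ with the stated exponent, likely using multiplicativity of the Mahler measure under the resultant construction of $F$.
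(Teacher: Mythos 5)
Your proposal follows essentially the same route as the paper: for real $\alpha$, Liouville's inequality with the linear polynomials $g(x) = x \mp 1$ together with $M(f) = |a_0|$, and for non-real $\alpha$, applying the same bound to $\alpha\overline\alpha$ as a root of the degree-$\binom{n}{2}$ integer polynomial $F(x) = \widetilde{D}_{n-1}^-(f)(x^{1/2})$ with constant term $a_0^{n-1}$, then converting the bound on $|\alpha\overline\alpha - 1|$ into one on $|\alpha| - 1$ via the elementary estimate $\sqrt{1 + 1/B} > 1 + 1/(2B+1)$, which is exactly the paper's inequality (\ref{sqrt-aux}) and accounts for the doubling of the exponent (from $2^{\binom{n}{2}-1}$ to $2^{\binom{n}{2}}$) and the $+1$. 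The steps you flag as needing care (expansivity of $F$ guaranteeing $\alpha\overline\alpha \neq 1$, and the constant term of $F$ being $a_0^{n-1}$ from Lemma~\ref{thm:d-polys}) are precisely the facts the paper invokes, so the plan is sound and essentially identical to the published argument.
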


\begin{proof}
For real $\alpha$, applying Liouville's inequality for $g(x) = x + 1$ and $g(x) = x - 1$ and using that $M(f) = |a_0|$ for expansive polynomials, we get:
\begin{equation} \label{liou-one}
	\frac{1}{|\alpha \pm 1|} \leq 2^{n-1} |a_0|.
\end{equation}

For non-real $\alpha$, we apply this inequality to $\alpha \overline\alpha$. Since $\overline\alpha$ is also a root of $f$, $\alpha \overline\alpha$ is a root of the polynomial $F(x) := \widetilde{D}_{n-1}^-(f)(x^{1/2})$, because it has all $\alpha_i \alpha_j$ as roots for $i < j$ by Lemma~\ref{thm:d-polys}. As $F$ is also an expansive polynomial, and it has degree $\binom{n}{2}$ and constant term is $a_0^{n-1}$, we get from (\ref{liou-one}):
\begin{equation*}
	\frac{1}{|\alpha \overline\alpha - 1|} \leq 2^{\binom{n}{2} - 1} |a_0|^{n-1}.
\end{equation*}
We can rearrange this as $|\alpha|^2 = \alpha \overline\alpha \geq 1 + \frac{1}{B}$ where $B$ is the right-hand side of the inequality above. The proof finishes by applying the fact that
\begin{equation} \label{sqrt-aux}
	\sqrt{1 + \frac{1}{B}} > 1 + \frac{1}{2B + 1}
\end{equation}
for any $B > 0$.
\end{proof}

We are able to give slightly stronger bounds by using not only the constant term, but also the leading coefficient:
\begin{theorem} \label{thm:bound-AZ}
For any root $\alpha$ of an expansive integer polynomial $f$:
\begin{equation*}
	\frac{1}{|\alpha| - 1} \leq \begin{cases}
		2^{n-2} (|a_0| + |a_n|) & (\alpha \in \mathbb{R}), \\
		2^{\binom{n-1}{2}} (|a_0| + |a_n|)^{n-1} + 1 & (\alpha \in \mathbb{C} \setminus \mathbb{R}).
	\end{cases}
\end{equation*}
\end{theorem}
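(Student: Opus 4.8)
The plan is to bypass Liouville's inequality and argue directly, extracting the extra factor of two and the term $|a_n|$ by handling the extreme factors of a product expansion more carefully than in Theorem~\ref{thm:bound-A}. The real case rests on a single-variable elementary inequality, and the non-real case on its pair-product analogue, after which the passage to $|\alpha|$ is the same square-root trick already used for Theorem~\ref{thm:bound-A}.

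For the real case I would begin with the remark that $\pm1$ cannot be roots of an expansive polynomial, so $f(1)$ and $f(-1)$ are nonzero integers and $|f(\pm1)|\ge1$. Taking $\alpha=\alpha_1$ and using $f(1)=a_n\prod_i(1-\alpha_i)$ (or $f(-1)$ when $\alpha<-1$), and noting $|\alpha|-1=|1-\alpha_1|$ or $|1+\alpha_1|$ accordingly, this gives $\frac{1}{|\alpha|-1}\le|a_n|\prod_{i=2}^n(1+|\alpha_i|)$. The gain over Theorem~\ref{thm:bound-A} comes from replacing the crude $1+|\alpha_i|\le2|\alpha_i|$ by the sharper bound
\[
\prod_{i=2}^n(1+r_i)\le 2^{\,n-2}\Big(1+\prod_{i=1}^n r_i\Big)\qquad(r_i>1),
\]
which I would prove by a short induction, each step reducing to $(s-1)(t-1)\ge0$. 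Since $|a_n|\prod_{i=1}^n|\alpha_i|=|a_0|$, the right-hand side collapses to exactly $2^{n-2}(|a_0|+|a_n|)$.

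For the non-real case I would reduce to the real one through $\beta:=\alpha\overline\alpha=|\alpha|^2>1$. By Lemma~\ref{thm:d-polys}, $\beta$ is a root of the integer polynomial $F(x):=\widetilde D_{n-1}^-(f)(x^{1/2})$, whose leading coefficient is $\pm a_n^{n-1}$ and whose value $F(1)=D_{n-1}^-(f)$ is a positive integer; hence $|a_n|^{n-1}\prod_{i<j}|1-\alpha_i\alpha_j|=|F(1)|\ge1$. Setting $\alpha=\alpha_1,\ \overline\alpha=\alpha_2$, isolating the conjugate-pair factor $|1-\alpha_1\alpha_2|=\beta-1$ and using $|1-\alpha_i\alpha_j|\le1+|\alpha_i||\alpha_j|$ gives
\[
\frac{1}{\beta-1}\le|a_n|^{n-1}\prod_{\substack{1\le i<j\le n\\(i,j)\ne(1,2)}}(1+|\alpha_i||\alpha_j|).
\]
The crux is the pair-product inequality
\[
\prod_{1\le i<j\le n}(1+r_ir_j)\le 2^{\binom{n-1}{2}}\Big(1+\prod_{i=1}^n r_i\Big)^{\,n-1}\qquad(r_i>1).
\]
Granting it, dividing out the single excluded factor $1+|\alpha_1||\alpha_2|\ge2$ and using $|a_n|^{n-1}\big(1+\prod_i|\alpha_i|\big)^{n-1}=(|a_0|+|a_n|)^{n-1}$ yields $\frac{1}{\beta-1}\le 2^{\binom{n-1}{2}-1}(|a_0|+|a_n|)^{n-1}$, and the proof then closes with the inequality $\sqrt{1+1/B}>1+1/(2B+1)$ exactly as in Theorem~\ref{thm:bound-A}.

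I expect the pair-product inequality to be the main obstacle. For even $n$ it has a clean proof: a $1$-factorization of the complete graph $K_n$ partitions the $\binom{n}{2}$ factors into $n-1$ perfect matchings, each matching carrying $n/2$ values $r_ir_j$ whose product is $\prod_i r_i$, so applying the single-variable inequality to each matching produces precisely the exponents $(n/2-1)(n-1)=\binom{n-1}{2}$ and the power $n-1$. The delicate point is odd $n$, where $K_n$ admits no perfect matching; decomposing instead into $n$ near-perfect matchings leaves a residual product $\prod_k\big(1+\prod_{i\ne k}r_i\big)$ that must still be shown to be at most $2\big(1+\prod_i r_i\big)^{n-1}$. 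This residual estimate is tight along the same boundary configurations (all $r_i=1$, or all but one equal to $1$), so I anticipate it is the step that will require the most care.
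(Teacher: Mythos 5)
Your route is genuinely different from the paper's. The paper derives both cases from the coefficient bounds of Lemma~\ref{thm:exp-coeffs} via the inequality $|f(x)|\le(1+|x|)^{n-1}(|a_n||x|+|a_0|)$ applied to the auxiliary polynomial $\hat f(x)=|a_n|\prod_i(x+|\alpha_i|)$, and in the non-real case it factors $\hat F(1)=\prod_{m=2}^n|\alpha_m|^{m-1}\hat f_{m-1}(1/|\alpha_m|)$ and applies that inequality telescopically. You instead rely on the elementary two-factor step $(1+s)(1+t)\le2(1+st)$ for $s,t\ge1$ and, in the non-real case, on a $1$-factorization of $K_n$. Your real case is complete and correct (your product inequality is the paper's bound $\hat f(1)\le 2^{n-1}(|a_0|+|a_n|)$ in disguise), and your even-$n$ non-real case closes cleanly with exactly the right bookkeeping $\binom{n-1}{2}=(n-1)(n/2-1)$.

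The genuine gap is the odd-$n$ residual estimate $\prod_{k=1}^n\bigl(1+P/r_k\bigr)\le2(1+P)^{n-1}$ with $P=\prod_i r_i$ and $r_i>1$, which you state but do not prove, and which does not follow from the tools you have set up: applying your two-factor step to these $n$ factors gives only $2^{n-1}(1+P^{n-1})$, and the comparison $2^{n-2}(1+P^{n-1})\le(1+P)^{n-1}$ is false already for $n=3$, where it reduces to $(P-1)^2\le0$. The estimate is nevertheless true, so the argument can be completed. One way: set $x_k=\log(P/r_k)$; the constraints $1<r_k$ and $\prod_k(P/r_k)=P^{n-1}$ place $(x_1,\dots,x_n)$ in the polytope $\{x_k\in[0,\log P],\ \sum_k x_k=(n-1)\log P\}$, and since $t\mapsto\log(1+e^t)$ is convex, the sum $\sum_k\log(1+e^{x_k})$ attains its maximum at a vertex of that polytope; every vertex has $n-1$ coordinates equal to $\log P$ and one equal to $0$, giving exactly $2(1+P)^{n-1}$. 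Alternatively, you can avoid the parity split altogether by replacing the matching decomposition with the paper's telescoping grouping of the pairs $(i,j)$ by $m=\max(i,j)$, which needs only the single-variable inequality you already proved.
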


\begin{proof}
We give a better alternative to Liouville's inequality in its special form (\ref{liou-one}). For this, first we present a simple proof of (\ref{liou-one}). We start by dividing the factorized form $f(x) = a_n (x - \alpha_1) (x - \alpha_2) \ldots (x - \alpha_n)$ by one of its root factors, then applying this for $|f(\pm1)|$:
\begin{gather*}
	\frac{|f(\pm1)|}{|\pm\!1 - \alpha_1|}
	= |a_n| \prod_{i=2}^{n} |\pm\!1 - \alpha_i|
	\leq |a_n| \prod_{i=2}^{n} (1 + |\alpha_i|) \leq \\
	\leq \frac{1}{2} |a_n| \prod_{i=1}^{n} (1 + |\alpha_i|)
	\leq \frac{1}{2} |a_n| \prod_{i=1}^{n} 2 |\alpha_i|
	= 2^{n-1} |a_0|.
\end{gather*}
Since $f$ is expansive and has integer coefficients, $|f(\pm1)| \geq 1$, which finishes the proof of (\ref{liou-one}).

We will modify this proof as follows. Define the polynomial $\hat{f}$ to have $-|\alpha_i|$ as roots, more precisely:
\begin{equation*}
	\hat{f}(x) = \hat{a}_n x^n + \ldots + \hat{a}_1 x + \hat{a}_0
	:= |a_n| (x + |\alpha_1|) (x + |\alpha_2|) \ldots (x + |\alpha_n|).
\end{equation*}
Now we can rewrite one consequence of the proof above as $\frac{1}{|\alpha \pm 1|} \leq \frac{1}{2} \hat{f}(1)$, and the next task is to give a better bound on $\hat{f}(1)$.

For this, we prove that for any expansive polynomial $f$, the following holds:
\begin{equation} \label{expansive-aux}
	|f(x)| \leq (1 + |x|)^{n-1} (|a_n| |x| + |a_0|).
\end{equation}
Indeed, by using the the inequalities (\ref{thm:exp-coeffs}) between the coefficients of expansive polynomials:
\begin{align*}
	|f(x)| &\leq \sum_{k=0}^{n} |a_k| |x|^k
	\leq \sum_{k=0}^{n} \left( \binom{n-1}{k-1} |a_n| + \binom{n-1}{k} |a_0| \right) |x|^k = \\
	&= \sum_{k=0}^{n-1} \binom{n-1}{k} |x|^k ( |a_n| |x| + |a_0| ),
\end{align*}
and the binomial theorem gives (\ref{expansive-aux}).

As $\hat{f}$ is also expansive, we can apply (\ref{expansive-aux}) to $\hat{f}(1)$, which completes the proof of the theorem for real $\alpha$.

For non-real $\alpha$, we apply the same method but for $\alpha \overline\alpha$ and $F$, and define $\hat{F}$ like $\hat{f}$ but for $F$. Now $\hat{F}(1)$ can be written as:
\begin{equation*}
	\hat{F}(1)
	= \prod_{m=2}^{n} \left( |a_n| \prod_{i=1}^{m-1} (1 + |\alpha_i| |\alpha_m|) \right)
	= \prod_{m=2}^{n} \left( |\alpha_m|^{m-1} \hat{f}_{m-1} \left( \frac{1}{|\alpha_m|} \right) \right)
\end{equation*}
with $\hat{f}_m(x) := |a_n| (x + |\alpha_1|) \ldots (x + |\alpha_m|)$. This is an expansive polynomial, so we can use (\ref{expansive-aux}) and continue:
\begin{align*}
	&\leq \prod_{m=2}^{n} \left( (1 + |\alpha_m|)^{m-2} |a_n| (1 + |\alpha_1| \ldots |\alpha_m|) \right) = \\
	&= \prod_{l=2}^{n} |a_n| (1 + |\alpha_1| \ldots |\alpha_l|) (1 + |\alpha_{l+1}|) \ldots (1 + |\alpha_n|)
\end{align*}
If we treat the product inside $\prod$ as an expansive polynomial with $x=1$, we can apply (\ref{expansive-aux}) to it:
\begin{equation*}
	\leq \prod_{l=2}^{n} 2^{n-l} |a_n| (1 + |\alpha_1| \ldots |\alpha_n|)
	= 2^{\binom{n-1}{2}} (|a_n| + |a_0|)^{n-1}.
\end{equation*}

We have $\frac{1}{\alpha \overline\alpha - 1} \leq \frac{1}{2} \hat{F}(1)$ like in the first part of the proof, and we can finish the proof as in Theorem~\ref{thm:bound-A}.
\end{proof}

\subsection{Bounds using the determinants} \label{sec:bounds:det}

The previous bounds (Theorem~\ref{thm:bound-A} and Theorem~\ref{thm:bound-AZ}) relied on Liouville's inequality and basic properties of expansive polynomials. They used only the size of the constant term and possibly the leading coefficient, but with quite large $n$-dependent factor. In this section we will use the determinant structure of the $D$-polynomials to give bounds with smaller factors but with all coefficients of the polynomial. We present two theorems, one using the height of the polynomial ($H(f)$) and one using the length ($L(f)$).

\begin{theorem} \label{thm:bound-H}
For any root $\alpha$ of an expansive integer polynomial $f$:
\begin{equation*}
	\frac{1}{|\alpha| - 1} \leq \begin{cases}
		\binom{n+1}{2} H(f) + \frac{n}{2} & (\alpha \in \mathbb{R}), \\
		\binom{n}{2} n! H(f)^{n-1} + \binom{n}{2} + 1 & (\alpha \in \mathbb{C} \setminus \mathbb{R}).
	\end{cases}
\end{equation*}
\end{theorem}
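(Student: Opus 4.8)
The plan is to discard Liouville's inequality and instead use two elementary facts: that $f$ (and, in the complex case, a determinant polynomial built from it) takes a nonzero \emph{integer} value at $1$, so that value is $\geq 1$ in absolute value; and a telescoping identity that pulls the factor $|\alpha-1|$ out explicitly. For a real root I would first reduce to $\alpha>1$: if $\alpha<-1$, replace $f(x)$ by $f(-x)$, which is again expansive with the same height and turns the root into $-\alpha>1$ while trading $f(1)$ for $f(-1)$. Since no root lies on the unit circle, $f(1)\in\mathbb{Z}\setminus\{0\}$, hence $|f(1)|\geq 1$. The identity to exploit is
\[
	f(1)=f(1)-f(\alpha)=\sum_{k=1}^{n}a_k\,(1-\alpha^{k})=-(\alpha-1)\sum_{k=1}^{n}a_k\sum_{l=0}^{k-1}\alpha^{l},
\]
which, together with $|f(1)|\geq 1$ and $|a_k|\leq H(f)$, yields
\[
	\frac{1}{\alpha-1}\leq\sum_{k=1}^{n}|a_k|\sum_{l=0}^{k-1}\alpha^{l}\leq H(f)\sum_{k=1}^{n}\sum_{l=0}^{k-1}\alpha^{l}.
\]
If each power $\alpha^{l}$ could be replaced by $1$, the double sum would collapse to $\sum_{k=1}^{n}k=\binom{n+1}{2}$, which is exactly the leading term of the claimed bound.

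The remaining correction term $\tfrac{n}{2}$ comes from a bootstrap that controls how far $\alpha^{l}$ exceeds $1$. Writing $\delta:=\alpha-1$ and assuming toward contradiction that $\tfrac{1}{\delta}>\binom{n+1}{2}H(f)+\tfrac{n}{2}$, the value $\delta$ is forced to be so small that $l\delta<1$ for all $l\leq n-1$. I would then use a second-order estimate of $\sum_{l=0}^{k-1}(1+\delta)^{l}$, whose leading term is $k$ and whose next term is of order $\delta\binom{k}{2}$, and feed it back into the inequality above. Because $\delta<1/(\binom{n+1}{2}H(f))$, the accumulated correction $H(f)\,\delta\sum_{k}\binom{k}{2}=H(f)\,\delta\binom{n+1}{3}$ is at most $\binom{n+1}{3}/\binom{n+1}{2}=(n-1)/3<n/2$, contradicting the assumption and closing the real case.

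For a non-real $\alpha$ I would pass to $\beta:=\alpha\overline\alpha=|\alpha|^{2}>1$. By Lemma~\ref{thm:d-polys}, $\beta$ is a root of $G(x):=\widetilde{D}_{n-1}^-(f)(x^{1/2})$, an integer polynomial of degree $\binom{n}{2}$ whose value $G(1)=D_{n-1}^-(f)$ is a positive integer by Theorem~\ref{thm:exp-dets}, hence $\geq 1$. Applying the same telescoping-plus-bootstrap argument to $G$ gives $\tfrac{1}{\beta-1}\leq\sum_{m}|g_m|\sum_{l=0}^{m-1}\beta^{l}$; using $\sum_{l=0}^{m-1}\beta^{l}\approx m\leq\binom{n}{2}$ under the bootstrap reduces this to $\tfrac{1}{\beta-1}\lesssim\binom{n}{2}\,L(G)$, and I would then bound the length $L(G)=\sum_m|g_m|$ by $n!\,H(f)^{n-1}$. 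Finally I convert the bound on $\beta=|\alpha|^{2}\geq 1+1/B$ into one on $|\alpha|-1$ through the elementary inequality (\ref{sqrt-aux}), exactly as in the proof of Theorem~\ref{thm:bound-A}; this produces the characteristic $2B+1$ shape and hence the $+\binom{n}{2}+1$ summand.

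I expect two steps to be the genuine obstacles. The first is obtaining the correction term as sharp as $n/2$ rather than a cruder $n-1$: this requires a tight second-order estimate of the geometric sum $\sum_{l=0}^{k-1}(1+\delta)^{l}$ valid under the smallness of $\delta$ guaranteed by the bootstrap. The second, and harder, is bounding the length of the determinant polynomial $G$ by $n!\,H(f)^{n-1}$ without picking up a spurious factor $2^{n-1}$: a naive expansion treats every matrix entry as a binomial and multiplies $(2H(f))^{n-1}$ over the $(n-1)!$ permutations, so the proof must instead exploit the sparsity and repeated structure of the matrix in Definition~\ref{def:dets} (controlling which monomials can collide in the Leibniz expansion) to absorb the binomial contributions into the factor $n!$.
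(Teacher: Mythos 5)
Your real-root argument is sound and, despite the different packaging, amounts to the same computation as the paper's: your telescoping identity gives $\frac{1}{\delta}\le H(f)\sum_{k=1}^{n}\sum_{l=0}^{k-1}(1+\delta)^{l}=H(f)\sum_{j=1}^{n}\binom{n+1}{j+1}\delta^{j-1}$, which is exactly the series the paper obtains from the Taylor-coefficient bound $|f^{(j)}(\pm1)|/j!\le\binom{n+1}{j+1}H(f)$ and then feeds into (\ref{powers-aux}); your bootstrap closes the same loop, and the higher-order corrections you only gesture at do behave (the $j$-th one is at most $\frac{n-1}{3}\cdot 3^{-(j-2)}$, so they sum to at most $\frac{n-1}{2}<\frac{n}{2}$). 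Two small points: reduce to $a_0>0$ before invoking Theorem~\ref{thm:exp-dets} for the positivity of $D_{n-1}^{-}(f)$, and note that the $f(x)\mapsto f(-x)$ reduction is harmless since it permutes the set $\{f(1),f(-1)\}$.

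The non-real case has a genuine gap: a lost factor of $2$. Your chain $\frac{1}{\beta-1}\lesssim\binom{n}{2}L(G)\le\binom{n}{2}\,n!\,H(f)^{n-1}$ followed by (\ref{sqrt-aux}) yields $\frac{1}{|\alpha|-1}<2\binom{n}{2}n!\,H(f)^{n-1}+\cdots$, twice the stated constant; to land on the theorem you need $\frac{1}{\beta-1}\le\frac12\binom{n}{2}\bigl(n!\,H(f)^{n-1}+1\bigr)$, and the crude estimate $\sum_{l=0}^{m-1}\beta^{l}\le m\le\binom{n}{2}$ cannot deliver that, since a priori all the weight of $L(G)$ could sit on the top coefficient. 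The paper recovers the half from a symmetry of the determinant: reversing $a_0,\dots,a_n$ preserves $D_{n-1}^{-}(f)$ up to sign, so the coefficients $A_j$ and $A_{N-j}$ of $F$ carry equally many expansion terms, and pairing $j$ with $N-j$ replaces the weight $j$ by its average $N/2$ (formally $\binom{j}{k}+\binom{N-j}{k}\le\binom{N}{k}$). Compare Theorem~\ref{thm:bound-L}, where this trick is not used and the constant is indeed $2\binom{n}{2}$. Separately, the bound $L(G)\le n!\,H(f)^{n-1}$ that you flag as the hard obstacle is obtained in the paper by a back-to-front Laplace expansion of $D_{n-1}^{-}(f)$: the last row contains only $2$ nonzero monomials, the row above at most $3$ in the remaining columns, and so on, giving at most $2\cdot3\cdots n=n!$ monomials in the fully expanded determinant. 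So that obstacle is surmountable, but as written your proof establishes neither it nor the factor of $2$, and without the latter it proves a strictly weaker statement.
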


\begin{proof}
Since $|f(\pm1)| \geq 1$, if we can show that $|f(\pm(1 + \varepsilon)) - f(\pm1)| < 1$ for some positive $\varepsilon$, then $f$ has no real root with absolute value $1 + \varepsilon$.

The (finite) power series expansion of $f(1 + \varepsilon)$ is:
\begin{equation*}
	f(1 + \varepsilon) = f(1) + f'(1) \varepsilon + \frac{f''(1)}{2} \varepsilon^2 + \ldots + \frac{f^{(n)}(1)}{n!} \varepsilon^n,
\end{equation*}
and that of $f(-1 - \varepsilon)$ is similar but with some different signs. We can bound the coefficients as follows:
\begin{equation*}
	\left| \frac{f^{(k)}(\pm 1)}{k!} \right|
	= \left| \sum_{j=0}^{n} (\pm 1)^{j-k} \binom{j}{k} a_j \right|
	\leq \binom{n+1}{k+1} H(f).
\end{equation*}
Therefore:
\begin{equation*}
	|f(\pm(1 + \varepsilon)) - f(\pm1)|
	\leq \sum_{k=1}^{n} \binom{n+1}{k+1} \varepsilon^k H(f)
	< \sum_{k=1}^{\infty} (n+1) \left( \frac{n}{2} \varepsilon \right)^k H(f).
\end{equation*}
It is easy to show that for any $A, B > 0$:
\begin{equation} \label{powers-aux}
	\sum_{k=1}^{\infty} \left( A \varepsilon \right)^k B \leq 1
	\quad \Longleftrightarrow \quad
	\varepsilon \leq \frac{1}{A(B+1)}.
\end{equation}
From this, the statement for real $\alpha$ follows.

For non-real $\alpha$, we do a similar calculation but for $F(x)$ (as in the proof of Theorem~\ref{thm:bound-A}). Let $F(x) = A_N x^N + \ldots + A_2 x^2 + A_1 x + A_0$, and recall that $F$ has degree $N = \binom{n}{2}$, and $F(1) = D_{n-1}^-(f)$, which can be written as a determinant of size $(n-1) \times (n-1)$ with entries like $a_i - a_j$, $\pm a_i$ or $0$ (see Definition~\ref{def:dets}). Now expand this determinant completely, and also multiply out all $a_i - a_j$ expressions. In this way, we get an expansion of the form
\begin{equation} \label{F1-expansion}
	F(1) = \sum_{l=0}^{N} A_l = \sum_{i=1}^{T_n} \left( \pm \prod_{j=1}^{n-1} a_{s_{i,j}} \right)\!,
\end{equation}
where $s_{i,j}$ indicates the factor of the $i$th term of the expansion which comes from the $j$th column of the determinant, and $T_n$ denotes the number of terms in the expansion. Let $T_n^{(l)}$ be the number of terms corresponding to $A_l$, i.e.\ where the sum of the indices is $l$. Then we have $|A_l| \leq T_n^{(l)} H(f)^{n-1}$ from the expansion above, and altogether:
\begin{equation} \label{F1-abs-bound}
	|F(1)| \leq \sum_{l=0}^{N} |A_l| \leq \sum_{l=0}^{N} T_n^{(l)} H(f)^{n-1} = T_n H(f)^{n-1}.
\end{equation}

To give a bound on $T_n$, expand the determinant row by row starting from the last one, going back to the first. The last row gives a choice of $2$ entries. The next one contains $4$ entries, but one of them is excluded by the previous choice, so there are only $3$ possibilities. On each next row, there are $2$ more entries, but one more position is excluded, so there are at most $1$ more possibilities (the excluded entry can be a double-entry, which reduces the number even more, but it cannot be a zero entry). This gives a bound on the number of possibilities as $2 \cdot 3 \cdot 4 \cdot \ldots \cdot n$, i.e.\ $T_n \leq n!$.

Therefore we get from (\ref{F1-abs-bound}) that $\sum_{l=0}^{N} |A_l| \leq n! H(F)^{n-1}$. Continuing similarly as in the first part of the proof, but with $F$:
\begin{equation*}
	\frac{|F^{(k)}(1)|}{k!}
	= \left| \sum_{l=0}^{N} \binom{l}{k} A_l \right|
	\leq \binom{N}{k} \sum_{l=0}^{N} |A_l|
	\leq \binom{N}{k} n! H(f)^{n-1}.
\end{equation*}
However, we can improve the bound by a factor of two, noticing the symmetry of the determinant: reversing the coefficient sequence $a_0, a_1, \ldots, a_n$ does not change the value except possibly the sign, so any $\prod a_i$ term in the expansion has a pair $\prod a_{n-i}$ (maybe itself) with probably different sign. This means, according to the definition of the $D$-polynomials (Definition~\ref{def:d-poly}), that $A_l$ and $A_{n-l}$ have the same number of $\prod a_i$ terms, i.e.\ $T_n^{(l)} = T_n^{(N-l)}$, so we can pair them up to get a better bound:
\begin{gather*}
	\frac{|F^{(k)}(1)|}{k!}
	= \left| \sum_{l=0}^{N} \frac{1}{2} \left( \binom{l}{k} A_l + \binom{N-l}{k} A_{N-l} \right) \right| \leq \\
	\leq \frac{1}{2} \max_{l=0}^{N} \left( \binom{l}{k} + \binom{N-l}{k} \right) \sum_{l=0}^{N} T_n^{(l)} H(f)^{n-1}
	\leq \frac{1}{2} \binom{N}{k} n! H(f)^{n-1}.
\end{gather*}
We can use this to calculate:
\begin{equation*}
	|F(1 + \varepsilon) - F(1)|
	\leq \sum_{k=1}^{N} \frac{1}{2} \binom{N}{k} n! H(f)^{n-1} \varepsilon^k
	< \sum_{k=1}^{\infty} \left( \frac{N}{2} \varepsilon \right)^k n! H(f)^{n-1}.
\end{equation*}
By using (\ref{powers-aux}), we get:
\begin{equation*}
	\frac{1}{\varepsilon} = \frac{N}{2} \left( n! H(f)^{n-1} + 1 \right),
\end{equation*}
and we can finish the proof as usual by using (\ref{sqrt-aux}).
\end{proof}

\begin{theorem} \label{thm:bound-L}
For any root $\alpha$ of an expansive integer polynomial $f$:
\begin{equation*}
	\frac{1}{|\alpha| - 1} \leq \begin{cases}
		n L(f) + n & (\alpha \in \mathbb{R}), \\
		2 \binom{n}{2} L(f)^{n-1} + 2 \binom{n}{2} + 1 & (\alpha \in \mathbb{C} \setminus \mathbb{R}).
	\end{cases}
\end{equation*}
\end{theorem}

\begin{proof}
The proof is the same as that of Theorem~\ref{thm:bound-H}, except for the bound on $|f^{(k)}(\pm 1) / k!|$ and $|F^{(k)}(1) / k!|$. The first one (which is used for real $\alpha$) is replaced by the following:
\begin{equation*}
	\left| \frac{f^{(k)}(\pm 1)}{k!} \right|
	= \left| \sum_{j=0}^{n} (\pm 1)^{j-k} \binom{j}{k} a_j \right|
	\leq \binom{n}{k} L(f).
\end{equation*}

For the second one (if $\alpha$ is non-real), it is sufficient to replace $n! H(f)^{n-1}$ with $L(f)^{n-1}$, i.e.\ to show that $\sum_{l=0}^{N} |A_l| \leq L(f)^{n-1}$. Examine the expansion (\ref{F1-expansion}). Notice that each column of the determinant contains each coefficient $a_j$ at most once, so no $(s_{i,1},\, s_{i,2},\, \ldots,\, s_{i,n-1})$ index sequence can occur more than once in the expansion. Therefore, if we replace these $T_n$ sequences with all possible sequences from $(1,1,\ldots,1)$ to $(n-1,n-1,\ldots,n-1)$, we get an upper bound:
\begin{equation*}
	|F(1)| \leq \sum_{l=0}^{N} |A_l|
	\leq \sum_{i=1}^{T_n} \prod_{j=1}^{n-1} |a_{s_{i,j}}|
	\leq \sum_{i_1=1}^{n-1} \sum_{i_2=1}^{n-1} \cdots \sum_{i_{n-1}=1}^{n-1} \prod_{j=1}^{n-1} |a_{i_j}| = L(f)^{n-1}.
	\qedhere
\end{equation*}
\end{proof}

\subsection{Comparison} \label{sec:bounds:cmp}

We summarize the bounds on $\frac{1}{|\alpha| - 1}$ as follows (ignoring any negligible terms):

\vspace{1.5ex}
\begin{tabular}{|l|l|l|}
	\hline
	& $\alpha \in \mathbb{R}$
	& $\alpha \in \mathbb{C} \setminus \mathbb{R}$ \\ \hline
	Theorem~\ref{thm:bound-A} &
		$2^{n-1} |a_0|$ &
		$2^{\binom{n}{2}} |a_0|^{n-1}$ \\ \hline
	Theorem~\ref{thm:bound-AZ} &
		$2^{n-2} (|a_0| + |a_n|)$ &
		$2^{\binom{n-1}{2}} (|a_0| + |a_n|)^{n-1}$ \\ \hline
	Theorem~\ref{thm:bound-H} &
		$\binom{n+1}{2} H(f)$ &
		$\binom{n}{2} n! H(f)^{n-1}$ \\ \hline
	Theorem~\ref{thm:bound-L} &
		$n L(f)$ &
		$2 \binom{n}{2} L(f)^{n-1}$ \\ \hline
\end{tabular}
\vspace{1.5ex}

We can see that all bounds have the form $c_n A$ for real and $c_n A^{n-1}$ for non-real roots, and in each case, the latter is the larger (at least for $n \geq 3$).

We compare the four different bounds in each column. First, $|a_0| \leq |a_0| + |a_n| \leq L(f)$, $|a_0| + |a_n| \leq 2H(f)$ and $H(f) \leq L(f)$, so neither is better then any one below it. Next, since $|a_n| < |a_0|$ for expansive polynomials, the second row is strictly better than the first. For comparing $|a_0| + |a_n|$ and $H(f)$, we use the coefficient size relations (\ref{thm:exp-coeffs}) for expansive polynomials and Stirling's approximation ($\sqrt{2 \pi n} \left(\frac{n}{e}\right)^n \leq n! \leq e \sqrt{n} \left(\frac{n}{e}\right)^n$):
\begin{align*}
	H(f) &= \max_{k=0}^{n} |a_k|
	\leq \max_{k=0}^{n} \left( \binom{n-1}{k-1} |a_n| + \binom{n-1}{k} |a_0| \right) \leq \\
	&\leq \binom{n-1}{\frac{n-1}{2}} (|a_n| + |a_0|)
	\leq \frac{e}{\pi} \frac{2^{n-1}}{\sqrt{n-1}} (|a_n| + |a_0|).
\end{align*}
From this, one can see that in general, the third row is not better than the second one. The following shows that neither is the fourth row:
\begin{equation*}
	L(f) = \sum_{k=0}^{n} |a_k|
	\leq \sum_{k=0}^{n} \left( \binom{n-1}{k-1} |a_n| + \binom{n-1}{k} |a_0| \right)
	= 2^{n-1} (|a_n| + |a_0|).
\end{equation*}
And also, since $L(f) \leq (n+1) H(f)$, the fourth row is not better than the third either.

The conclusion is that in general, neither row is better than any other except that the second row is better than the first. It depends on the particular circumstances which of these bounds is the best. For example when the middle coefficients are much larger than the constant term and the leading coefficient (close to the extent that (\ref{thm:exp-coeffs}) permits), then $|a_0| + |a_n|$ may be the best measure, but otherwise $H(f)$ or $L(f)$.

\section{Further directions} \label{sec:final}

There are still several open questions regarding expansive polynomials and the results of this paper. In the future, we will try to answer the following:
\begin{itemize}
	\item The bounds of the form $c_n A$ and $c_n A^{n-1}$ given in Section~\ref{sec:bounds} are probably not the best, at least in the $c_n$ factors. We will try to find the best possible bounds, and prove sharpness by finding families of polynomials that have exactly the same expansivity gap asymptotically. Our conjecture is that the dependence on the coefficient size $A$ for fixed $n$ is asymptotically sharp. In the case of real roots, this can be easily proven e.g.\ by the polynomials $(A-1) x^n - A$.
	\item The $D$-conditions in Theorem~\ref{thm:exp-dets} inherently used the assumption that the coefficients are real, when splitting conditions like $|A| < B$ to $B \pm A > 0$. This is sufficient for our purposes with integer coefficients, but it would be an interesting question to generalize these conditions to arbitrary complex coefficients.
	\item There are $2n$ $D$-polynomials ($\widetilde{D}_k^\pm(f)(x)$ for $1 \leq k \leq n$ and for each sign), but we only fully understand $3$ of them: $\widetilde{D}_{n-1}^-(f)(x)$, and the related $\widetilde{D}_n^+(f)(x)$ and $\widetilde{D}_n^-(f)(x)$ (see Lemma~\ref{thm:d-polys}). We tried in vain to find the meaning of the others, e.g.\ how their roots relate to the roots of $f$.
	\item The bound in Theorem~\ref{thm:bound-H} involved the factor $n!$, which is a bound on $T_n$, i.e.\ the number of terms in the expansion of $D_{n-1}^-(f)$. This is not the best bound, and it would be an interesting question on its own to find better bounds on $T_n$, or even find an exact formula. The first few values for $n = 1, 2, \ldots$ are $T_n = 1, 2, 4, 12, 40, \ldots$
\end{itemize}


\begin{thebibliography}{9}

\bibitem{dyndir}
G.\ Barat, V.\ Berthé, P.\ Liardet and J.\ Thuswaldner, Dynamical directions in numeration, in \emph{Annales de l'Institut Fourier} \textbf{56}(7) (2006), pp.~1987–2092

\bibitem{bareiss}
E.\ H.\ Bareiss, Sylvester's identity and multistep integer-preserving Gaussian elimination, \emph{Mathematics of Computation} \textbf{22} (1968), pp.~565--578.

\bibitem{burcsi-phd}
P.\ Burcsi, Algorithmic aspects of generalized number systems, PhD thesis, Budapest, 2008

\bibitem{burcsi-alg}
P.\ Burcsi and A.\ Kov\'acs, An algorithm checking a necessary condition of number system constructions, in \emph{Annales Univ.\ Sci.\ Budapest., Sect.\ Comp.}\ \textbf{25} (2005), pp.~143-152

\bibitem{cohen}
H.\ Cohen, \emph{A Course in Computational Algebraic Number Theory} (Springer-Verlag Berlin Heidelberg, 1996)

\bibitem{schur-cohn}
A.\ Cohn, \"Uber die Anzahl der Wurzeln einer algebraischen Gleichung in einem Kreise, in \emph{Math.\ Z.\ 14} (Verlag von Julius Springer, 1922), pp.~110--148.

\bibitem{kovacs-binsys}
A.\ Kov\'acs, Generalized binary number systems, in \emph{Annales Univ.\ Sci.\ Budapest.\, Sect.\ Comp. } \textbf{20} (2001), pp.~195--206

\bibitem{waldschmidt}
M.\ Waldschmidt, Diophantine Approximation on Linear Algebraic Groups, Transcendence Properties of the Exponential Function in Several Variables, Grundlehren der Mathematischen Wissenschaften (Springer-Verlag Berlin Heidelberg, 2000)

\end{thebibliography}
\end{document}